\theoremstyle{plain}
\newtheorem{theorem}{Theorem}[section]
\newtheorem*{theorem*}{Theorem}
\newtheorem{definition}[theorem]{Definition}
\newtheorem{lemma}[theorem]{Lemma}
\newtheorem{prop}[theorem]{Proposition}
\newtheorem{cor}[theorem]{Corollary}
\newtheorem{ex}[theorem]{Example}
\newtheorem*{mt*}{Main Theorem}
\let\phi\varphi
\newcommand\C{{\mathbb C}}
\newcommand\R{{\mathbb R}}
\newcommand{\del}{\partial}
\newcommand{\delbar}{\overline{\del}}
\title[$p$-K\"ahler and balanced structures on nilmanifolds]{$p$-K\"ahler and balanced structures on nilmanifolds with nilpotent complex structures}
\author[Tommaso Sferruzza and Nicoletta Tardini]{Tommaso Sferruzza and  Nicoletta Tardini}
\address{Dipartimento di Scienze Matematiche, Fisiche e Informatiche\\
Plesso Matematico e Informatico,
Universit\`{a} di Parma\\
Parco Area delle Scienze 53/A, 43124 \\
Parma, Italy}
\email{tommaso.sferruzza@unipr.it}
\email{nicoletta.tardini@gmail.com}
\email{nicoletta.tardini@unipr.it}
\keywords{Balanced metric; $p$-K\"ahler form; nilmanifold; Fr\"olicher spectral sequence.}
\thanks{\newline 
The first author is partially supported by GNSAGA of INdAM.
The second author is partially supported by GNSAGA of INdAM and has financially been supported by the Programme ``FIL-Quota Incentivante'' of University of Parma and co-sponsored by Fondazione Cariparma.}
\subjclass[2010]{53C15, 53C25, 53C55, 22E25}
\begin{document}

\begin{abstract}
Let $(X,J)$ be a nilmanifold with a left-invariant nilpotent complex structure.
We study the existence of $p$-K\"ahler structures (which include K\"ahler and balanced metrics) on $X$. More precisely, we determine an optimal $p$ such that there are no $p$-K\"ahler structures on $X$.
Finally, we show that, contrarily to the K\"ahler case, on compact complex manifolds there is no relation between the existence of balanced metrics and the degeneracy step of the Fr\"olicher spectral sequence. More precisely, on balanced manifolds the degeneracy step can be arbitrarily large.
\end{abstract}
\maketitle

\section{Introduction}

Let $(X,J)$ be a compact complex manifold of complex dimension $n$ and let $g$ be an Hermitian metric on $(X,J)$ with fundamental form $\omega$. Then, it is well known that if $\omega$ is a K\"ahler metric, i.e., $d\omega=0$ then the Fr\"olicher spectral sequence degenerates at the first step. However, if there are no K\"ahler metrics on $(X,J)$ then the degeneracy step might be higher than one, as first shown in \cite{Kodaira64}. For examples of higher degeneration steps, one can refer to \cite{cordero_fernandez_ugarte_gray}. It is therefore natural to ask whether there are weaker metric conditions that impose restrictions on the degeneration of the Fr\"olicher spectral sequence. In fact, in \cite[Conjecture 1.3]{popovici} it is conjectured that if there exists an SKT metric on $(X,J)$, namely an Hermitian metric $\omega$ such that $\del\delbar\omega=0$, the degeneracy step is $2$. Another important class of metrics in non-K\"ahler geometry is provided by \emph{balanced metrics}, that are Hermitian metrics satisfying $d\omega^{n-1}=0$. Clearly, on compact complex surfaces they coincide with K\"ahler metrics, but in higher dimension they are a larger class of non necessarily K\"ahler metrics. In \cite{popovici14} it was shown that the existence of a balanced metric does not imply the degeneration at the first step, see also \cite{ceballos_otal_ugarte_villacampa}. Here, we show that in fact on balanced manifolds the degeneracy step can be arbitrarily large. More precisely, we prove the following (see Theorem \ref{thm:main} and Corollary \ref{cor:balanced-degeneration})

\begin{theorem*}
For every $n\geq 2$, there exists a nilmanifold of complex dimension $4n-2$ that admits balanced metrics and such that the Fr\"olicher spectral sequence does not degenerate at step $n$.
\end{theorem*}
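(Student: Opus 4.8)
The plan is to produce, for each $n\ge 2$, an explicit compact nilmanifold $X_n$ of complex dimension $4n-2$ carrying a left-invariant nilpotent complex structure, together with an invariant balanced Hermitian metric, and to compute enough of its Fr\"olicher spectral sequence to exhibit a nonzero differential $d_n$, so that it does not degenerate at step $n$. The first reduction is to pass everything to the underlying nilpotent Lie algebra: for nilmanifolds with a nilpotent complex structure the inclusion of left-invariant forms is a Dolbeault quasi-isomorphism, it identifies the entire Fr\"olicher spectral sequence with the one of the Chevalley--Eilenberg (invariant) double complex $(\Lambda^{\bullet,\bullet}\mathfrak{g}_n^*,\partial,\delbar)$, and it suffices to find an \emph{invariant} balanced form. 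Thus nilpotency of $J_n$, existence of a balanced metric, and non-degeneration at step $n$ all become finite-dimensional linear algebra on $(\mathfrak{g}_n,J_n)$, and I may work entirely with the structure equations of a chosen coframe $\{\omega^1,\dots,\omega^{4n-2}\}$ of $(1,0)$-forms.

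The heart of the argument is to engineer the structure equations so that the $p$-filtration of $(\Lambda^{\bullet,\bullet}\mathfrak{g}_n^*,\partial,\delbar)$ contains a \emph{zig-zag of length $n$}. Concretely, I would single out a $\delbar$-closed form whose $\partial$-image is $\delbar$-exact, whose first primitive again has $\delbar$-exact $\partial$-image, and so on for $n-1$ steps, so that its class survives to the page $E_n$; a weight or dimension count should then force the terminal step $d_n$ to land in a nonzero Dolbeault class. This staircase is exactly the mechanism behind the examples of \cite{cordero_fernandez_ugarte_gray}, and here it is the structure isolated by Theorem~\ref{thm:main}: the optimal threshold $p$ for which no $p$-K\"ahler form exists certifies that the relevant classes are non-trivial and that the differential cannot vanish prematurely. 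The complex dimension $4n-2$ is the room this length-$n$ staircase needs while keeping the coframe symmetric enough for the remaining step.

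For balancedness I would exhibit an explicit invariant $(1,1)$-form, such as $\omega=\tfrac{i}{2}\sum_j\omega^j\wedge\bar\omega^j$ (or a suitable modification adapted to the staircase), and check directly from the structure equations that $d\omega^{4n-3}=0$; equivalently, that the torsion of the coframe has no component obstructing co-closedness of $\omega^{4n-3}$. Since a balanced metric is precisely an $(N-1)$-K\"ahler structure in complex dimension $N=4n-2$, this is the assertion that $(4n-3)$-K\"ahler forms exist, which sits above the non-existence threshold produced by Theorem~\ref{thm:main} and is therefore consistent with it. To reach the exact dimension $4n-2$ one may also pad the essential zig-zag with flat (K\"ahler) torus directions: because the Fr\"olicher spectral sequence of a product is the graded tensor product of those of the factors and a complex torus degenerates at $E_1$, such padding leaves the degeneration step governed solely by the staircase factor.

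I expect the main obstacle to be the simultaneous fulfilment of the two competing requirements. A nonzero $d_n$ forces $J_n$ to be very far from K\"ahler and the metric torsion to be large along the staircase, whereas the balanced condition $d\omega^{4n-3}=0$ is a strong closedness constraint; the delicate point is to choose the structure constants so that the torsion created to sustain the length-$n$ zig-zag does not interfere with co-closedness of $\omega^{4n-3}$, and then to verify rigorously, on the Lie-algebra model, both that the chosen class genuinely survives the pages $1,\dots,n-1$ and that $d_n$ is nonzero. The survival bookkeeping through all lower pages, rather than the single final nonvanishing, is where the computation is most likely to be subtle.
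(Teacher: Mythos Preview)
Your outline has the right overall shape---produce an explicit nilmanifold, verify a balanced metric at the Lie-algebra level, and check non-degeneration of the Fr\"olicher spectral sequence---but it stops short of an actual construction, and several of the linking claims are off.

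First, the paper does not invent the zig-zag from scratch: it uses the nilmanifolds already constructed by Bigalke and Rollenske in \cite{bigalke-rollenske}, and the non-degeneration at $E_n$ is simply \emph{cited} from that paper. The only new work needed is the balanced check. So your plan to ``engineer the structure equations'' and then do the ``survival bookkeeping through all lower pages'' is aiming to redo something that is already in the literature; the paper's contribution is precisely that this known family also carries balanced metrics.

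Second, the balanced verification in the paper is far simpler than you anticipate. For the diagonal metric $\omega=\tfrac{i}{2}\sum_j\varphi^j\wedge\bar\varphi^j$, one expands $\omega^{4n-3}$ as a sum of terms $\alpha_k$ each missing exactly $\varphi^k\wedge\bar\varphi^k$. Then $d\alpha_k=0$ for every $k$ because, by the explicit structure equations, no $d\varphi^j$ contains a ``diagonal'' summand $\varphi^k\wedge\bar\varphi^k$. That single observation finishes the proof; no modification of the metric and no ``padding with torus directions'' is needed.

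Third, your appeal to Theorem~\ref{thm:main} is circular and misplaced: in this paper \textit{thm:main} \emph{is} the statement that these nilmanifolds admit balanced metrics, not a $p$-K\"ahler obstruction. Moreover, the $p$-K\"ahler non-existence results (Theorems~\ref{thm:no-n-k-kahler} and~\ref{thm:br-p-kahler}) play no logical role in establishing the Fr\"olicher non-degeneration; your claim that the optimal $p$-K\"ahler threshold ``certifies that the relevant classes are non-trivial and that the differential cannot vanish prematurely'' has no basis here. These two threads---special metrics and the spectral sequence---are proved entirely independently in the paper.
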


Such nilmanifolds were first constructed in \cite{bigalke-rollenske}, where the degeneration of the Fr\"olicher spectral sequence was studied. Here we investigate the existence of special structures on such manifolds. In particular, we show that, other than being balanced, they do not admit any SKT and locally conformally K\"ahler metrics. Moreover, in Theorem \ref{thm:br-p-kahler} we show that they do not admit any $p$-K\"ahler structure for $1\leq p<4n-3$.\\
More generally, we study in Section \ref{section:p-kahler-nilmanifolds} the existence of $p$-K\"ahler forms on nilmanifolds with nilpotent complex structures. We recall that, for $1\leq p\leq n$, a $p$-\emph{K\"ahler} form $\Omega$ on a complex manifold $X$ of dimension $n$ is a $d$-closed real transverse $(p,p)$-form. In particular, $1$-K\"ahler forms coincide with K\"ahler metrics and $(n-1)$-K\"ahler forms coincide with balanced metrics. They were introduced in \cite{alessandrini-andreatta}.
Here, we determine an optimal $p$ such that there are no $p$-K\"ahler structures on nilmanifolds with nilpotent complex structure. More precisely, we prove the following (see Theorem \ref{thm:no-n-k-kahler})

\begin{theorem*}
Let $X=\Gamma\backslash G$ be a nilmanifold of complex dimension $n$ endowed with a left-invariant nilpotent complex structure $J$. 
Let $\left\lbrace\varphi^i\right\rbrace_{i=1,\cdots,n}$ be a co-frame of left-invariant $(1,0)$-forms satisfying, for $i=1,\cdots,n$
$$
d\varphi^i\in \Lambda^2\left\langle\varphi^1,\cdots,\varphi^{i-1},\bar\varphi^1,
\cdots,\bar\varphi^{i-1}\right\rangle,
$$
and let $k$ be the index such that
$$
d\varphi^i=0 \quad\text{for}\quad i=1,\cdots,k 
\quad\text{and}\quad d\varphi^{i}\neq 0 \quad\text{for}\quad i=k+1,\cdots,n. 
$$
Then, there are no $(n-k)$-K\"ahler forms on $X$.
\end{theorem*}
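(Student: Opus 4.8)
The plan is to argue by contradiction. Suppose $\Omega$ is a $d$-closed transverse $(n-k,n-k)$-form on $X$; I will produce an invariant, strongly positive $(k,k)$-form $\eta$ that is a $\del$- and $\delbar$-boundary, and then pair $\Omega$ against $\eta$: transversality makes the pairing strictly positive while Stokes makes it zero, a contradiction. Concretely, set $\Phi=\varphi^1\wedge\cdots\wedge\varphi^k$ and $\eta=i^{k^2}\,\Phi\wedge\bar\Phi$. Since $\Phi$ is a nonzero decomposable $(k,0)$-form, transversality of $\Omega$ forces $\Omega\wedge\eta$ to be a pointwise positive volume form, so $\int_X\Omega\wedge\eta>0$. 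Note that $\varphi^1,\dots,\varphi^k$ and their conjugates are closed by the choice of $k$, so $\eta$ itself is closed; the content lies in exhibiting $\eta$ as a \emph{boundary}.

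The heart of the argument is therefore the construction of an invariant form $\gamma$ of bidegree $(k-1,k)$ with $\del\gamma$ (resp. $\delbar\bar\gamma$) a nonzero constant multiple of $\Phi\wedge\bar\Phi$; then $\eta':=\del\gamma+\delbar\bar\gamma$ is a real boundary, and rescaling $\gamma$ by a suitable complex constant and using $\overline{i^{k^2}}=(-1)^k i^{k^2}$ gives $\eta'=\eta$. The existence of $\gamma$ rests only on $d\varphi^{k+1}\neq 0$ together with the crucial structural feature that $d\varphi^{k+1}\in\Lambda^2\langle\varphi^1,\dots,\varphi^{k},\bar\varphi^1,\dots,\bar\varphi^{k}\rangle$, i.e. every index occurring in $d\varphi^{k+1}$ is $\le k$. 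I would split $d\varphi^{k+1}$ into its $(2,0)$, $(1,1)$ and $(0,2)$ parts, at least one of which is nonzero. If some $(1,1)$ coefficient of $\varphi^i\wedge\bar\varphi^j$ (with $i,j\le k$) is nonzero, take $\gamma=\bar\varphi^{k+1}\wedge\bigwedge_{l\le k,\,l\neq j}\varphi^l\wedge\bigwedge_{l\le k,\,l\neq i}\bar\varphi^l$; if some $(2,0)$ coefficient of $\varphi^{i_0}\wedge\varphi^{j_0}$ is nonzero, take $\gamma=\varphi^{k+1}\wedge\bigwedge_{l\le k,\,l\neq i_0,j_0}\varphi^l\wedge\bar\varphi^1\wedge\cdots\wedge\bar\varphi^k$; the $(0,2)$ case follows by conjugating the $(2,0)$ construction. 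In each case only the differential of the single non-closed leg $\varphi^{k+1}$ or $\bar\varphi^{k+1}$ contributes, and the confinement of all indices to $\{1,\dots,k\}$ guarantees that exactly one monomial survives the wedge, namely a nonzero multiple of $\Phi\wedge\bar\Phi$.

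Finally, with such $\gamma$ in hand, I would close by integration by parts: since $\Omega$ is of pure type and $d$-closed we have $\del\Omega=\delbar\Omega=0$, while $\Omega\wedge\gamma$ has bidegree $(n-1,n)$, so $\int_X\Omega\wedge\del\gamma=\int_X d(\Omega\wedge\gamma)=0$ and likewise $\int_X\Omega\wedge\delbar\bar\gamma=0$; hence $\int_X\Omega\wedge\eta=0$, contradicting $\int_X\Omega\wedge\eta>0$. I expect the main obstacle to be not the Stokes step but the construction of $\gamma$: one must produce a boundary that is genuinely strongly positive rather than an arbitrary $(k,k)$-boundary, and it is precisely the index confinement of $d\varphi^{k+1}$ that forces the surviving term to be the positive monomial $\Phi\wedge\bar\Phi$ with no polluting cross terms. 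This $\gamma$ realizes $\eta$ as a nonzero positive $(k,k)$-current that is a boundary, which obstructs $(n-k)$-K\"ahler structures by the Alessandrini--Andreatta duality; the direct Stokes argument avoids invoking that duality, and since $\eta$ and $\gamma$ are invariant it applies to an arbitrary, not necessarily invariant, closed transverse $\Omega$ without appealing to symmetrization.
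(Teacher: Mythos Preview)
Your argument is correct and follows essentially the same route as the paper. The paper invokes a lemma of Hind--Medori--Tomassini (which packages exactly your Stokes/transversality contradiction) and then exhibits, in the two cases $\delbar\varphi^{k+1}\neq 0$ and $\del\varphi^{k+1}\neq 0$, the $(2k-1)$-form whose differential is $\pm C_{i\bar j}\,\varphi^{1\cdots k\,\bar 1\cdots\bar k}$; your $\gamma$ is precisely (the conjugate of) that form, and your explicit integration-by-parts simply reproduces the proof of the cited lemma. One harmless over-caution: since $J$ is integrable, $d\varphi^{k+1}$ has no $(0,2)$-component, so your third case never actually arises.
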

In particular, for $k=1$ we obtain the conclusion for balanced metrics. Notice that, for complex dimension $3$, this result is compatible with the classification of balanced structures on $6$-dimensional nilmanifolds endowed with nilpotent complex structures proved in \cite{ugarte}.\\
We recall that nilpotent complex structures include abelian and bi-invariant complex structures. We further observe that with this degree of generality the index $p=n-k$ in Theorem \ref{thm:no-n-k-kahler}
 is optimal. Indeed, we construct in examples \ref{ex:1} and \ref{ex:2} two   $2$-step nilmanifolds with left-invariant abelian complex structures that admit a $(n-k-1)$-K\"ahler form and a $(n-k+1)$-K\"ahler form.\\
\vspace{0.2cm}

\emph{Acknowledgement.} The authors would like to kindly thank Adriano Tomassini for many useful discussions and suggestions in the preparation of the paper. The authors would also like to thank Anna Fino for pointing out the reference \cite{ugarte}.

\section{$p$-K\"ahler structures on nilmanifolds with nilpotent complex structures}\label{section:p-kahler-nilmanifolds}

In this Section we are going to discuss the existence of special structures, called $p$-K\"ahler, on nilmanifolds with nilpotent complex structures. These structures include K\"ahler and balanced metrics.

\subsection{$p$-K\"ahler structures} 

Let us start by recalling some definitions that will be used in the following.\\
Let $(X,J)$ be a complex manifold of complex dimension $n$, i.e., the datum of a differentiable manifold $X$ of real dimension $2n$ and an integrable almost complex structure $J$. 
By extending $J$ to, respectively, the complexified tangent bundle $T_{\C}X:=TX\otimes \C$, and complexified cotangent bundle $T_{\C}^*X:=T^*X\otimes \C$, we obtain the following direct sum decompositions in terms of the respective $\pm i$-eigenspaces, i.e.,
\begin{align*}
&T_{\C}X=T^{1,0}X\oplus T^{0,1}X\,,\\
&T_{\C}^*X=(T^{1,0}X)^*\oplus (T^{0,1}X)^*,
\end{align*}
and, by considering the exterior powers of $T^*_{\C}X$, we obtain
\[
\textstyle\bigwedge_{\C}^kX:=\bigwedge^{k}(T_{\C}^*X)=\bigoplus_{p+q=k}\bigwedge^{p,q}X,
\]
where each $\bigwedge^{p,q}X:=\bigwedge^p (T^{1,0}X)^{\ast}\otimes\bigwedge^{q}(T^{0,1}X)^{\ast}$ is the bundle of \emph{$(p,q)$-forms} on $X$. We will denote with $A^k(X, \C)$ and $A^{p,q}(X)$ the spaces of smooth sections of the bundles $\bigwedge_{\C}^kX$ and $\bigwedge^{p,q}X$ respectively.\\
We recall the following pointwise definitions. Fix $x\in X$. We denote with $T_xX$ the tangent space of $X$ at $x$ and with $T_x^*X$ its dual. If we consider $\{\phi^i\}_{i=1}^n$ a basis of $\bigwedge^{1,0}(T^*_xX\otimes\C)$, then a basis of $\bigwedge^{p,q}(T^*_xX\otimes \C)$ is given by
\[
\{\phi^{i_1}\wedge\dots\wedge\phi^{i_p}\wedge\overline{\phi}^{j_1}\wedge\dots\wedge\overline{\phi}^{j_q}:\, 1\leq i_1<\dots <i_p\leq n,\, 1\leq j_1<\dots <j_q\leq n\}.
\]
Let us denote the constant $\sigma_p:=i^{p^{2}}2^{-p}$ and the space of \emph{real $(p,p)$-forms} by
\[
\textstyle\bigwedge_{\R}^{p,p}(T^*_xX)=\{\alpha\in\bigwedge^{p,p}(T^*_xX\otimes\C): \overline{\alpha}=\alpha\}.
\]
Then, a basis for $\bigwedge_{\R}^{p,p}(T^*_xX)$ is given by
\[
\{\sigma_p\phi^{i_1}\wedge\dots\wedge\phi^{i_p}\wedge\overline{\phi}^{i_1}\wedge\dots\wedge\overline{\phi}^{i_p}:1\leq i_1<\dots < i_p\leq n\}.
\]
We note that the $(n,n)$-form
\[
vol=(\frac{i}{2}\phi^1\wedge\overline{\phi}^1)\wedge\dots\wedge(\frac{i}{2}\phi^n\wedge\overline{\phi}^n)=
\sigma_n \phi^1\wedge\dots\wedge\phi^n\wedge \overline{\phi}^1\wedge\dots\wedge\overline{\phi}^n
\]
is indeed real and, thus, it defines a volume.\\
We say that a real $(n,n)$-form $\alpha$ is \emph{positive}, respectively \emph{strictly positive}, if it holds that
\[
\alpha=c\cdot vol,
\]
with $c\geq 0$, respectively $c>0$. 
A $(p,0)$-form $\alpha$ is said to be \emph{simple} if
\[
\alpha=\alpha^1\wedge\dots\wedge\alpha^p,
\]
for $\alpha^i\in\bigwedge^{1,0}(T^*_xX\otimes \C)$.\\
A real $(p,p)$-form $\Omega\in \bigwedge_{\R}^{p,p}(T^*_xX)$ is said to be \emph{transverse} if the $(n,n)$-form
\[
\Omega\wedge\sigma_{n-p}\alpha\wedge\overline{\alpha}
\]
is strictly positive, for every non-zero simple form $\alpha\in\bigwedge^{n-p,0}(T^*_xX\otimes \C)).$\\

We recall the following
\begin{definition}
Let $(X,J)$ be a complex manifold of complex dimension $n$ and let $1\leq p\leq n$.
A $p$-\emph{K\"ahler} form $\Omega$ on $X$ is a $d$-closed real transverse $(p,p)$-form, namely $d\Omega=0$ and, at every point $x\in X$, $\Omega_x\in \bigwedge_{\R}^{p,p}(T^*_xX)$ is transverse.
\end{definition}
Notice that, by definition, for $p=1$ we obtain K\"ahler metrics and for $p=n-1$ we obtain balanced metrics. Indeed, if $\Omega$ is a $(n-1)$-K\"ahler form then by \cite{michelson} there exists an Hermitian metric $\omega$ on $X$ such that $\omega^{n-1}=\Omega$, so in particular $d\omega^{n-1}=0$. We also point out that for $1<p<n-1$,
 $p$-K\"ahler forms have no metric meaning (cf. \cite[Proposition 2.1]{alessandrini-bassanelli}). However, notice that if $\Omega$ is a transverse $(p, p)$-form on $X$, and $Y$ is a $p$-dimensional complex submanifold of $X$,then $\Omega_{|Y}$ is a volume form on $Y$.
 Such structures have been originally introduced and studied in \cite{alessandrini-andreatta,alessandrini-andreatta-errata,alessandrini-bassanelli}.
Recently, their behaviour under small deformations of the complex structure has been studied in \cite{rao_wan_zhao}. In \cite{hind-medori-tomassini} such forms have been extended to non-integrable almost-complex manifolds and we recall here the following lemma that provides an obstruction to their existence, see \cite[Proposition 3.4]{hind-medori-tomassini}.
\begin{lemma}[\cite{hind-medori-tomassini}]\label{lemma:hind-medori-tomassini}
 Let $(X,J)$ be a compact complex manifold of complex dimension $n$. Suppose that there exists a non-closed $(2n-2p-1)$-form $\eta$ such that the $(n-p,n-p)$-component of $d\eta$ satisfies
$$
(d\eta)^{n-p,n-p}=\sum_k c_k\psi_k\wedge\bar\psi_k
$$
where the $\psi_k$ are simple $(n-p,0)$-forms and the $c_k$ have the same sign. Then, $(X,J)$ does not admit a $p$-K\"ahler form.
\end{lemma}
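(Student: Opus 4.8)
The plan is to apply Lemma \ref{lemma:hind-medori-tomassini} with $p=n-k$. For this choice the relevant degrees are: the auxiliary form $\eta$ must have degree $2n-2p-1=2k-1$, the component to be controlled is $(d\eta)^{n-p,n-p}=(d\eta)^{k,k}$, and the admissible building blocks are $\psi\wedge\bar\psi$ with $\psi$ a simple $(k,0)$-form. The structural input I would exploit is that $\varphi^1,\dots,\varphi^k$ and their conjugates are $d$-closed, whereas $\varphi^{k+1}$ is the first non-closed generator, with $d\varphi^{k+1}\neq0$ and
\[
d\varphi^{k+1}\in\Lambda^2\left\langle\varphi^1,\dots,\varphi^k,\bar\varphi^1,\dots,\bar\varphi^k\right\rangle .
\]
Since all forms are left-invariant, any $\eta$ built from them descends to a global form on the compact nilmanifold $X=\Gamma\backslash G$, as the lemma requires.

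First I would expand, using the multi-index convention $\varphi^{I}:=\varphi^{i_1}\wedge\dots\wedge\varphi^{i_{|I|}}$ for $I=\{i_1<\dots<i_{|I|}\}$,
\[
d\varphi^{k+1}=\sum_{\substack{I,J\subseteq\{1,\dots,k\}\\ |I|+|J|=2}}c_{IJ}\,\varphi^I\wedge\bar\varphi^J,
\]
which records the $(2,0)$, $(1,1)$ and $(0,2)$ components simultaneously. As $d\varphi^{k+1}\neq0$, fix one pair $(I_0,J_0)$ with $c_{I_0J_0}\neq0$, let $I_0^{c},J_0^{c}$ be the complements of $I_0,J_0$ in $\{1,\dots,k\}$, and set
\[
\mu:=\varphi^{I_0^{c}}\wedge\bar\varphi^{J_0^{c}},\qquad \eta:=\mu\wedge\varphi^{k+1}.
\]
Then $\eta$ has total degree $2k-2+1=2k-1$. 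Because every factor of $\mu$ is a closed generator $\varphi^j,\bar\varphi^j$ with $j\leq k$, we get $d\mu=0$ and hence $d\eta=\pm\,\mu\wedge d\varphi^{k+1}$.

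The heart of the argument is the computation of $(d\eta)^{k,k}$. Wedging $\mu$ with a monomial $c_{IJ}\varphi^I\wedge\bar\varphi^J$ yields bidegree $(k-|I_0|+|I|,\,k-|J_0|+|J|)$, which equals $(k,k)$ precisely when $|I|=|I_0|$ and $|J|=|J_0|$; combined with the non-vanishing requirements $I\cap I_0^{c}=\emptyset$ and $J\cap J_0^{c}=\emptyset$, this forces $I=I_0$ and $J=J_0$. Thus a single monomial survives and
\[
(d\eta)^{k,k}=\pm\,c_{I_0J_0}\,\varphi^1\wedge\dots\wedge\varphi^k\wedge\bar\varphi^1\wedge\dots\wedge\bar\varphi^k=\pm\,c_{I_0J_0}\,\psi\wedge\bar\psi,\qquad \psi:=\varphi^1\wedge\dots\wedge\varphi^k .
\]
Here $\psi$ is a simple $(k,0)$-form, and after rescaling $\eta$ by a suitable nonzero constant the coefficient may be taken to be a positive real number. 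Since only one term appears, the sign hypothesis of Lemma \ref{lemma:hind-medori-tomassini} holds vacuously, and $\eta$ is non-closed because $(d\eta)^{k,k}\neq0$; the lemma then forbids any $(n-k)$-K\"ahler form on $X$.

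The main obstacle I anticipate is combinatorial rather than analytic: isolating exactly one surviving monomial in $(d\eta)^{k,k}$. The nilpotency hypothesis $d\varphi^{i}\in\Lambda^2\langle\varphi^1,\dots,\varphi^{i-1},\bar\varphi^1,\dots,\bar\varphi^{i-1}\rangle$ is what does the work, and twice over: it makes $\mu$ closed (so that $d\eta=\pm\mu\wedge d\varphi^{k+1}$), and it confines $d\varphi^{k+1}$ to the closed generators $\varphi^1,\dots,\varphi^k,\bar\varphi^1,\dots,\bar\varphi^k$, which is exactly what allows $\mu$ to complete a single chosen monomial into the volume-type form $\psi\wedge\bar\psi$. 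I would finally verify that the formula applies uniformly across the degenerate shapes of $(I_0,J_0)$ — a pure $(2,0)$ or $(0,2)$ monomial (where $I_0^c$ or $J_0^c$ is all of $\{1,\dots,k\}$) and the case $k=1$, which recovers the balanced statement — but in each instance the same computation goes through verbatim.
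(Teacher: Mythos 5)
Your proposal does not prove the statement under review: its very first sentence is ``apply Lemma \ref{lemma:hind-medori-tomassini}'', i.e.\ you assume the lemma you were asked to prove and use it to derive the non-existence of $(n-k)$-K\"ahler forms on nilmanifolds with nilpotent complex structure. That is a proof of Theorem \ref{thm:no-n-k-kahler}, not of the lemma, and as an argument for the lemma it is circular. The lemma is a general obstruction statement about arbitrary compact complex manifolds --- no Lie group, co-frame, or nilpotency hypothesis is available --- so the multi-index computation with $\varphi^1,\dots,\varphi^{k+1}$, however correct, cannot bear on it. (For what it is worth, your computation is sound and is essentially the paper's proof of Theorem \ref{thm:no-n-k-kahler}, packaged more uniformly: one choice of $(I_0,J_0)$ with $c_{I_0J_0}\neq 0$ handles in a single stroke the two cases $\delbar\varphi^{k+1}\neq 0$ and $\del\varphi^{k+1}\neq 0$ that the paper treats separately. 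But the paper quotes the lemma without proof from the cited reference, and your text supplies nothing in its place.)

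The missing idea is a Stokes-plus-positivity pairing, which uses compactness and both defining properties of a $p$-K\"ahler form. Suppose $\Omega$ were a $p$-K\"ahler form. Since $d\Omega=0$, Leibniz gives $\Omega\wedge d\eta=d(\Omega\wedge\eta)$, so by Stokes' theorem on the compact manifold $X$,
\begin{equation*}
\int_X \Omega\wedge d\eta=\int_X d\left(\Omega\wedge\eta\right)=0.
\end{equation*}
On the other hand, $d\eta$ has total degree $2n-2p$, and wedging its $(n-p+r,n-p-r)$-component with the $(p,p)$-form $\Omega$ lands in bidegree $(n+r,n-r)$, which vanishes unless $r=0$; hence
\begin{equation*}
\Omega\wedge d\eta=\Omega\wedge (d\eta)^{n-p,n-p}=\sigma_{n-p}^{-1}\sum_k c_k\,\Omega\wedge\sigma_{n-p}\,\psi_k\wedge\bar\psi_k.
\end{equation*}
Transversality of $\Omega$ says precisely that each $\Omega\wedge\sigma_{n-p}\,\psi_k\wedge\bar\psi_k$ is a nonnegative multiple of the volume form at every point, strictly positive wherever $\psi_k\neq 0$. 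Since the $c_k$ all have the same sign and the component $(d\eta)^{n-p,n-p}$ is not identically zero (as in all the paper's applications, where $d\eta$ equals its $(n-p,n-p)$-component and is nonzero), the integral on the right is nonzero --- contradicting the Stokes computation. None of these ingredients (the pairing $\int_X\Omega\wedge d\eta$, Stokes, the bidegree selection, the pointwise positivity from transversality) appears in your proposal, so the lemma itself remains unproved.
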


We will use this lemma to prove the non-existence of a $p$-K\"ahler form (for suitable $p$) on nilmanifolds with nilpotent complex structures.\\

\subsection{Nilmanifolds with nilpotent complex structure}

Let $X=\Gamma\backslash G$ be a $2n$-dimensional nilmanifold, namely $G$ is a connected, simply-connected nilpotent Lie group and $\Gamma$ is a lattice in $G$. Let $J$ be a left-invariant complex structure on $X$. We denote with $\mathfrak{g}$ the Lie algebra of $G$. Then, $J$ is said to be \emph{nilpotent} if the ascending series $\left\lbrace \mathfrak{g}_i^J\right\rbrace_{i\geq 0}$ defined by
$$
\mathfrak{g}_0^J=0,\qquad
\mathfrak{g}_i^J=\left\lbrace X\in\mathfrak{g}\,|\,
[X,\mathfrak{g}]\subseteq \mathfrak{g}_{i-1}^J,\,
[JX,\mathfrak{g}]\subseteq \mathfrak{g}_{i-1}^J
\right\rbrace
$$
satisfies $\mathfrak{g}_k^J=\mathfrak{g}$ for some $k>0$.

Then, by \cite[Theorem 2]{cordero-fernandez-gray-ugarte-1} (cf. also \cite[Theorem 12]{cordero-fernandez-gray-ugarte-2}), $J$ is nilpotent if and only if there exists a co-frame of left-invariant $(1,0)$-forms $\left\lbrace\varphi^i\right\rbrace_{i=1,\cdots,n}$ satisfying, for $i=1,\cdots,n$
$$
d\varphi^i\in \Lambda^2\left\langle\varphi^1,\cdots,\varphi^{i-1},\bar\varphi^1,
\cdots,\bar\varphi^{i-1}\right\rangle.
$$
From now on we will abbreviate e.g., $\varphi^{ij\bar k}:=\varphi^i\wedge\varphi^j\wedge\bar\varphi^k$ and so on.
Now we prove the following
\begin{theorem}\label{thm:no-n-k-kahler}
Let $X=\Gamma\backslash G$ be a nilmanifold of complex dimension $n$ endowed with a left-invariant nilpotent complex structure $J$. With the above notations, let $k$ be the index such that
$$
d\varphi^i=0 \quad\text{for}\quad i=1,\cdots,k 
\quad\text{and}\quad d\varphi^{i}\neq 0 \quad\text{for}\quad i=k+1,\cdots,n. 
$$
Then, there are no $(n-k)$-K\"ahler forms on $X$.
\end{theorem}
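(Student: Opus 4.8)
The plan is to apply Lemma \ref{lemma:hind-medori-tomassini} with $p = n-k$, which reduces the theorem to producing a suitable non-closed form $\eta$ whose top-bidegree component of $d\eta$ is a sum of simple squares $\psi_k \wedge \bar\psi_k$ with coefficients of a common sign. With $p = n-k$ the target bidegree is $(n-p,n-p) = (k,k)$, so I would look for a $(2k-1)$-form $\eta$ (i.e., a $(2n-2p-1)$-form) whose differential has a $(k,k)$-component of the required shape.

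First I would exploit the structure equations. By the Cordero--Fern\'andez--Gray--Ugarte characterization recalled above, each $\varphi^1,\dots,\varphi^k$ is closed, while $\varphi^{k+1}$ is the first non-closed form; by the nilpotency condition $d\varphi^{k+1} \in \Lambda^2\langle \varphi^1,\dots,\varphi^k,\bar\varphi^1,\dots,\bar\varphi^k\rangle$, so $d\varphi^{k+1}$ is a $2$-form built entirely from the closed generators $\varphi^1,\dots,\varphi^k,\bar\varphi^1,\dots,\bar\varphi^k$. The natural candidate is to take $\eta$ to be (a multiple of) the wedge of $\varphi^{k+1}$ with the real $(k-1,k)$- or $(k,k-1)$-type product of the closed generators, for instance something of the form $\eta = \sigma\,\varphi^{k+1}\wedge \varphi^{1\cdots k}\wedge\bar\varphi^{\,1\cdots \widehat{j}\cdots k}$ summed appropriately, chosen so that $\eta$ has degree $2k-1$ and $d\eta = \pm\,(d\varphi^{k+1})\wedge(\text{closed }(k-1,k-1)\text{-factor})$ up to sign. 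Since the $\varphi^1,\dots,\varphi^k,\bar\varphi^1,\dots,\bar\varphi^k$ are all closed, $d\eta$ is governed solely by $d\varphi^{k+1}$, and its $(k,k)$-component lands in $\Lambda^{k,k}\langle\varphi^1,\dots,\varphi^k\rangle$.

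The key computation is then to show that this $(k,k)$-component can be written as $\sum_\ell c_\ell\,\psi_\ell\wedge\bar\psi_\ell$ with the $\psi_\ell$ simple $(k,0)$-forms and the $c_\ell$ of one sign. Writing $d\varphi^{k+1} = \sum A_{ij}\varphi^{ij} + \sum B_{i\bar j}\varphi^{i\bar j} + \sum C_{\bar i\bar j}\bar\varphi^{ij}$ with indices in $\{1,\dots,k\}$, only the mixed part $\sum B_{i\bar j}\varphi^{i\bar j}$ contributes to a $(1,1)$-type factor, and after wedging with the full $(k-1,k-1)$ block $\varphi^{1\cdots \hat\imath\cdots k}\wedge\bar\varphi^{1\cdots\hat\jmath\cdots k}$ the surviving terms are exactly the diagonal $\varphi^{1\cdots k}\wedge\bar\varphi^{1\cdots k}$ multiples; the point is that $\varphi^{1\cdots k}$ is a single simple $(k,0)$-form, so the entire $(k,k)$-component is a scalar multiple of the single simple square $\varphi^{1\cdots k}\wedge\bar\varphi^{1\cdots k}$, trivially satisfying the one-sign hypothesis. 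I would need to verify that this scalar is nonzero, which is where the assumption $d\varphi^{k+1}\neq 0$ enters: one must check that the relevant component of $d\varphi^{k+1}$ does not vanish, or otherwise choose the indices defining $\eta$ so as to pick out a nonzero coefficient of $d\varphi^{k+1}$.

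The main obstacle I anticipate is precisely this last nonvanishing and sign-consistency check: a priori $d\varphi^{k+1}$ could have several independent components in $\Lambda^2\langle\varphi^1,\dots,\varphi^k,\bar\varphi^{1},\dots,\bar\varphi^k\rangle$, and wedging with different $(k-1,k-1)$ blocks may produce a $(k,k)$-part that is a genuine sum $\sum_\ell c_\ell\,\psi_\ell\wedge\bar\psi_\ell$ over several distinct simple $\psi_\ell=\varphi^{i_1\cdots i_k}$ rather than a single one, in which case I must argue that the coefficients can be arranged to share a sign (or that a single well-chosen $\eta$ isolates one nonzero term). Because every such $\psi_\ell$ is an ordered wedge of $k$ of the generators $\varphi^1,\dots,\varphi^k$, hence automatically simple, the simplicity requirement of Lemma \ref{lemma:hind-medori-tomassini} is free; the real work is the bookkeeping that at least one coefficient is nonzero (guaranteed by $d\varphi^{k+1}\neq 0$) and that a consistent sign choice is available, after which the lemma immediately yields the absence of $(n-k)$-K\"ahler forms.
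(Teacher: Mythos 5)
Your overall strategy---applying Lemma \ref{lemma:hind-medori-tomassini} with $p=n-k$ to a $(2k-1)$-form obtained by wedging $\varphi^{k+1}$ with a block of the closed generators---is exactly the paper's, and your key computation, namely that for $\eta=\varphi^{k+1}\wedge\varphi^{1\cdots\hat{\imath}\cdots k}\wedge\bar\varphi^{1\cdots\hat{\jmath}\cdots k}$ one gets $(d\eta)^{k,k}=\pm B_{i\bar\jmath}\,\varphi^{1\cdots k\,\bar1\cdots\bar k}$, reproduces the paper's first case (there the coefficient is called $C_{i\bar j}$). But there is a genuine gap: your claim that a nonzero surviving coefficient is ``guaranteed by $d\varphi^{k+1}\neq 0$'' is false for the family of forms $\eta$ you actually compute with. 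A form with one unbarred and one barred generator removed can only detect the $(1,1)$-component $\delbar\varphi^{k+1}=\sum_{l,m} B_{l\bar m}\varphi^{l\bar m}$; the $(2,0)$-part $\del\varphi^{k+1}=\sum_{l<m} A_{lm}\varphi^{lm}$ wedges to zero against the block $\varphi^{1\cdots\hat{\imath}\cdots k}$, since it would have to contribute the single missing factor $\varphi^i$ twice. Nilpotency perfectly well allows $\delbar\varphi^{k+1}=0$ with $\del\varphi^{k+1}\neq 0$: the Iwasawa manifold, with $d\varphi^1=d\varphi^2=0$, $d\varphi^3=\varphi^{12}$ (so $n=3$, $k=2$), is the standard instance. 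There every form in your family, $\eta=\varphi^{3}\wedge\varphi^{a}\wedge\bar\varphi^{b}$ with $a,b\in\{1,2\}$, is \emph{closed}, so Lemma \ref{lemma:hind-medori-tomassini} cannot be invoked at all, and no choice of the indices $i,j$ repairs this.

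The missing content is exactly the paper's second case: when $\delbar\varphi^{k+1}=0$ (hence $\del\varphi^{k+1}\neq 0$), pick $i<j\leq k$ with $A_{ij}\neq 0$ and take instead the $(k-1,k)$-form $\eta=\varphi^{k+1}\wedge\varphi^{1\cdots\hat{\imath}\cdots\hat{\jmath}\cdots k}\wedge\bar\varphi^{1\cdots k}$, i.e.\ remove \emph{two} unbarred generators and \emph{no} barred ones; then $d\eta=\pm A_{ij}\,\varphi^{1\cdots k\,\bar 1\cdots\bar k}$ and the Lemma applies as before (on the Iwasawa manifold this is $\eta=\varphi^{3\bar1\bar2}$, with $d\eta=\varphi^{12\bar1\bar2}$). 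You do gesture at ``$(k-1,k)$- or $(k,k-1)$-type'' products at the outset, but your computation and your nonvanishing argument treat only the mixed case, and the case distinction according to which of $\delbar\varphi^{k+1}$, $\del\varphi^{k+1}$ is nonzero is an essential, not cosmetic, part of the proof. (Two minor points, shared with the paper and harmless: the surviving coefficient is a priori complex, which is fixed by replacing $\eta$ with a suitable unimodular multiple so that the coefficient becomes real; and for integrable $J$ the $(0,2)$-component you allowed in $d\varphi^{k+1}$ vanishes automatically, so it never needed to be carried along.)
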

\begin{proof}
In order to prove the result we will exhibit a $(2k-1)$-form $\eta$ satisfying the hypothesis of Lemma \ref{lemma:hind-medori-tomassini}.\\
Since $d\varphi^{k+1}\neq 0$ then at least one between $\del \varphi^{k+1}$ and $\delbar \varphi^{k+1}$ is different from $0$. Suppose now that $\delbar \varphi^{k+1}\neq 0$. We will deal later with the other case.
Since $J$ is nilpotent,
$$
\delbar\varphi^{k+1}=\sum_{l,m=1}^k C_{l\bar m}\,\varphi^{l\bar m}\neq 0
$$
for some constants $C_{l\bar m}$.
Hence, we fix two indices $i,j\leq k$ such that $C_{i\bar j}\neq 0$.\\
We define the following $(2k-1)$-form
$$
\eta=\varphi^{1\cdots \hat i\cdots k+1\, \bar 1\cdots \hat{\bar j}\cdots \bar k},
$$
where $\hat{\phi}^i$ and $\hat{\overline{\phi}^j}$ mean that we are removing the forms $\phi^i$ and $\overline{\phi}^j$ from $\eta$.\\
By the structure equations, since $d\varphi^i=0$ for $i=1,\cdots,k$ and $J$ is nilpotent,
$$
d\eta=\pm C_{i\bar j}\,\varphi^{1\cdots k\, \bar 1\cdots \bar k}
$$ 
hence $\eta$ satisfies the hypothesis of Lemma \ref{lemma:hind-medori-tomassini} and so there is no $(n-k)$-K\"ahler structure on $X$.\\
On the other side, suppose that $\delbar \varphi^{k+1}=0$ and $\del \varphi^{k+1}\neq 0$.\\
Since $J$ is nilpotent,
$$
\del\varphi^{k+1}=\sum_{l,m=1, l<m}^k A_{l\,m}\,\varphi^{l\, m}\neq 0
$$
for some constants $A_{l\,m}$.
Hence, we fix two indices $i<j\leq k$ such that $A_{i\, j}\neq 0$.\\
We define the following $(2k-1)$-form
$$
\eta=\varphi^{1\cdots \hat i\cdots \hat{j}\cdots k+1\, \bar 1\cdots \bar k}.
$$
By the structure equations, since $d\varphi^i=0$ for $i=1,\cdots,k$ and $J$ is nilpotent,
$$
d\eta=\pm A_{i\,j}\varphi^{1\cdots k\, \bar 1\cdots \bar k}
$$ 
hence $\eta$ satisfies the hypothesis of Lemma \ref{lemma:hind-medori-tomassini} and so there is no $(n-k)$-K\"ahler structure on $X$.\\
\end{proof}

As a Corollary for $k=1$ one gets immediately
\begin{cor}\label{cor:no-balanced}
Let $X=\Gamma\backslash G$ be a nilmanifold of complex dimension $n$ endowed with a left-invariant nilpotent complex structure $J$, with co-frame of $(1,0)$-forms $\left\lbrace\varphi^i\right\rbrace_{i=1,\cdots,n}$ satisfying the following structure equations,
$$
d\varphi^1=0 
\quad\text{and}\quad d\varphi^{i}\neq 0 \quad\text{for}\quad i=2,\cdots,n. 
$$
Then, there are no balanced metrics on $X$.
\end{cor}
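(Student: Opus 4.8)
The plan is to deduce the statement directly from the obstruction in Lemma \ref{lemma:hind-medori-tomassini}, applied with $p=n-k$. For this value one has $2n-2p-1=2k-1$ and $n-p=k$, so the lemma asks for a non-closed $(2k-1)$-form $\eta$ whose $(k,k)$-component $(d\eta)^{k,k}$ is a sum $\sum_l c_l\,\psi_l\wedge\bar\psi_l$ of simple terms with coefficients of a single sign. The cleanest route is to engineer an $\eta$ for which this component reduces to the \emph{single} monomial $\pm c\,\varphi^{1\cdots k\,\bar 1\cdots\bar k}=\pm c\,(\varphi^1\wedge\cdots\wedge\varphi^k)\wedge\overline{(\varphi^1\wedge\cdots\wedge\varphi^k)}$ with $c\neq 0$: a single simple term trivially satisfies the sign hypothesis, $\varphi^1\wedge\cdots\wedge\varphi^k$ is a simple $(k,0)$-form, and non-vanishing of $(d\eta)^{k,k}$ forces $\eta$ to be non-closed.

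The construction exploits the structure equations together with the fact that $\varphi^1,\dots,\varphi^k$ are $d$-closed by hypothesis and hence so are $\bar\varphi^1,\dots,\bar\varphi^k$; thus the only ``active'' generator available up to index $k+1$ is $\varphi^{k+1}$. Since $d\varphi^{k+1}\neq 0$, either $\delbar\varphi^{k+1}\neq 0$ or $\del\varphi^{k+1}\neq 0$, and by nilpotency $d\varphi^{k+1}\in\Lambda^2\langle\varphi^1,\dots,\varphi^k,\bar\varphi^1,\dots,\bar\varphi^k\rangle$; hence its $(1,1)$-part $\delbar\varphi^{k+1}=\sum_{l,m\leq k}C_{l\bar m}\,\varphi^{l\bar m}$ and its $(2,0)$-part $\del\varphi^{k+1}=\sum_{l<m\leq k}A_{lm}\,\varphi^{lm}$ involve only indices $\leq k$. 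I would treat these two cases in parallel.

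In the case $\delbar\varphi^{k+1}\neq 0$, fix $i,j\leq k$ with $C_{i\bar j}\neq 0$ and set $\eta=\varphi^{1\cdots \hat i\cdots k+1\,\bar 1\cdots \hat{\bar j}\cdots \bar k}$, i.e. the product of all $\varphi^1,\dots,\varphi^{k+1}$ with $\varphi^i$ deleted and all $\bar\varphi^1,\dots,\bar\varphi^k$ with $\bar\varphi^j$ deleted; this is a form of total degree $2k-1$. Applying $d$ as a derivation, every factor except $\varphi^{k+1}$ is closed, so $d\eta=\pm\,\varphi^{1\cdots \hat i\cdots k}\wedge(d\varphi^{k+1})\wedge\bar\varphi^{1\cdots \hat{\bar j}\cdots \bar k}$. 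Extracting the $(k,k)$-component selects the $(1,1)$-part $\delbar\varphi^{k+1}$, and wedging against the surviving factors kills every monomial $\varphi^{l\bar m}$ except the one with $l=i$, $m=j$ (all other indices already occur), leaving $(d\eta)^{k,k}=\pm C_{i\bar j}\,\varphi^{1\cdots k\,\bar 1\cdots\bar k}$, as required. The case $\delbar\varphi^{k+1}=0$, $\del\varphi^{k+1}\neq 0$ is symmetric: choosing $i<j\leq k$ with $A_{ij}\neq 0$ and deleting \emph{two} holomorphic factors $\varphi^i,\varphi^j$ while keeping all $\bar\varphi^1,\dots,\bar\varphi^k$ gives a degree-$(2k-1)$ form whose $(k,k)$-component now picks out the $(2,0)$-part and again collapses to $\pm A_{ij}\,\varphi^{1\cdots k\,\bar 1\cdots\bar k}$.

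I expect the one genuinely delicate point to be the bidegree bookkeeping: one must check that $\eta$ has total degree exactly $2k-1$, that the surviving piece of $d\eta$ lands precisely in bidegree $(k,k)$ (which is why the $(1,1)$- and $(2,0)$-parts of $d\varphi^{k+1}$ must be absorbed by differently shaped $\eta$), and that after wedging exactly one monomial of $d\varphi^{k+1}$ persists. All of this rests on nilpotency, which confines $d\varphi^{k+1}$ to indices $\leq k$; the ambiguous sign factors are harmless, since a single-term expression automatically satisfies the same-sign requirement of Lemma \ref{lemma:hind-medori-tomassini}.
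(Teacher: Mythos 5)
Your proposal is correct and follows essentially the same route as the paper: the paper obtains this corollary immediately as the case $k=1$ of Theorem \ref{thm:no-n-k-kahler}, and the proof of that theorem is precisely your construction (same appeal to Lemma \ref{lemma:hind-medori-tomassini}, same choice of $\eta$ in the two cases $\delbar\varphi^{k+1}\neq 0$ and $\del\varphi^{k+1}\neq 0$, same computation $d\eta=\pm C_{i\bar j}\,\varphi^{1\cdots k\,\bar 1\cdots \bar k}$, respectively $\pm A_{ij}\,\varphi^{1\cdots k\,\bar 1\cdots \bar k}$). The only step you leave implicit is that a balanced metric $\omega$ produces the $(n-1)$-K\"ahler form $\omega^{n-1}$, so that non-existence of $(n-1)$-K\"ahler forms rules out balanced metrics; this identification is recorded in the paper's preliminaries, so nothing essential is missing.
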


We notice that there are large classes of complex nilmanifolds where Theorem \ref{thm:no-n-k-kahler} can be applied. 
For instance, if $J$ is \emph{abelian}, namely $[Jx,Jy]=[x,y]$ for every $x,y\in\mathfrak{g}$, or \emph{bi-invariant}, namely $J[x,y]=[Jx,y]$ for every $x,y\in\mathfrak{g}$, then it is nilpotent (cf. \cite{salamon}). Moreover, by \cite{ornea-otiman-stanciu} if $(X,J)$ is a $2$-step nilmanifold with left-invariant complex structure and $J$-invariant center, then $J$ is nilpotent.\\
We now show that $p=n-k$ in Theorem \ref{thm:no-n-k-kahler}
 is optimal. Indeed, we will show now two examples of $2$-step nilmanifolds with left-invariant abelian complex structures that admit a $(n-k-1)$-K\"ahler form and a $(n-k+1)$-K\"ahler form.

\begin{ex}\label{ex:1}
Let $X$ be the $6$-dimensional $2$-step nilmanifold with abelian complex structure defined by the following structure equations
$$
d\varphi^1=d\varphi^2=0,\qquad d\varphi^3=\varphi^{1\bar2}
$$
where $\left\lbrace\varphi^i\right\rbrace_{i=1,2,3}$ is a co-frame of $(1,0)$-forms.\\
With the previous notations we have $n=3$ and $k=2$. So, by Theorem \ref{thm:no-n-k-kahler} there are no $1$-K\"ahler forms on $X$. Of course, this was already known since on non-toral nilmanifolds there are no K\"ahler metrics.\\
Now, we show that there exists a $2$-K\"ahler form on $X$, namely a $(n-k+1)$-K\"ahler form.\\
Let
$$
\Omega:=-\varphi^{1\bar1 2\bar2}-\varphi^{1\bar1 3\bar3}-\varphi^{2\bar2 3\bar3}\,.
$$
Then, $\Omega$ is a real transverse $(2,2)$-form and by the structure equations
$$
d\Omega=0\,.
$$
Hence, $\Omega$ is a $2$-K\"ahler form on $X$. In particular, there exists a balanced metric $\omega$ on $X$ such that $\omega^2=\Omega$.
In fact, it is easy to see that
$$
\omega=i\phi^{1\bar 1}+i\phi^{2\bar 2}+i\phi^{3\bar 3}.
$$
\end{ex}

\begin{ex}\label{ex:2}
Let $X$ be the $8$-dimensional $2$-step nilmanifold with abelian complex structure defined by the following structure equations
$$
d\varphi^1=0,\qquad d\varphi^2=d\varphi^3=d\varphi^4=\varphi^{1\bar1}
$$
where $\left\lbrace\varphi^i\right\rbrace_{i=1,2,3,4}$ is a co-frame of $(1,0)$-forms.\\
With the previous notations we have $n=4$ and $k=1$. So, by Corollary \ref{cor:no-balanced} there are no balanced metrics on $X$.\\
Now, we show that there exists a $2$-K\"ahler form on $X$, namely a $(n-k-1)$-K\"ahler form.\\
Let
$$
\Omega:=-\varphi^{1\bar1 2\bar2}-\varphi^{1\bar1 3\bar3}-\varphi^{1\bar1 4\bar4}-
\varphi^{2\bar2 3\bar3}-\varphi^{2\bar2 4\bar4}-
\varphi^{3\bar3 4\bar4 }+
$$
$$
+
\varphi^{2\bar2 3\bar4}+
\varphi^{2\bar2 4\bar3 }+
\varphi^{2\bar4 3\bar3}+
\varphi^{4\bar2 3\bar3}+
\varphi^{2\bar3 4\bar4}+
\varphi^{3\bar2 4\bar4}
\,.
$$
Then, $\Omega$ is a real transverse $(2,2)$-form and by the structure equations one can see directly that
$$
d\Omega=0\,.
$$
Hence, $\Omega$ is a $2$-K\"ahler form on $X$.
\end{ex}

\section{Special Hermitian metrics on the Bigalke and Rollenske's nilmanifolds}

In this Section we are going to discuss the existence of special Hermitian metrics and $p$-K\"ahler forms on 
the $2$-step nilmanifolds with nilpotent complex structure constructed by Bigalke and Rollenske in \cite{bigalke-rollenske}. In particular, 
for every $n\geq 2$, these $(4n-2)$-dimensional compact complex manifolds are  such that the Fr\"olicher spectral sequence does not degenerate at the $E_n$ term.\\
We start by recalling the construction.
Fix $n\geq 2$ and let $G_n$ be the real nilpotent subgroup of $GL(2n+2,\mathbb{C})$ consisting of the matrices of the form
$$
\left(
\begin{array}{ccccccccccc} 
1& 0 &  &  &  &  & \cdots & & 0 &\bar y_1 & w_1\\
  & 1 &0 & \cdots &0 &\bar z_1 &-x_1 &0 & \cdots & 0 & w_2\\
  &    &\ddots & & & & \ddots & & & \vdots &\vdots\\
   & & & 1& 0& \cdots&0 & \bar z_{n-1}& -x_{n-1} &0& w_n\\
   & & &  &  1& 0& & \cdots & & 0 & y_1\\
    & & & & & \ddots & & & & \vdots &\vdots\\
    & & & & &  & & & &  & \\
    & & & & &  & & & &  & \\
    & & & & &  &  & \ddots & & \vdots & \vdots \\
    & & & & &  & & & 1 & 0 & y_n\\
     & & & & &  & & & & 1 & z_1 \\
    & & & & &  & & & &  & 1 \\
\end{array}
\right).
$$
with
$x_1, \ldots , x_{n-1}, y_1, \ldots , y_n, z_1, \ldots , z_{n-1}, w_1,\ldots, w_n\in\mathbb{C}.
$
\\
Let $\Gamma$ be the subgroup of $G_n$ consisting of the matrices of the same form and entries in $\mathbb{Z}[i]$. Then, $\Gamma$ is a lattice in $G_n$ and the quotient $X^{4n-2}:=\Gamma\backslash G_n$ is a compact $(4n-2)$-dimensional $2$-step nilmanifold with a left-invariant complex structure.\\
A global co-frame of left-invariant $(1,0)$-forms is given by
$$
dx_1,\ldots,dx_{n-1},dy_1,\ldots,dy_n,dz_1,\ldots,dz_{n-1},\omega_1,\ldots,\omega_n
$$
where
$$
\omega_1 = d w_1 -\bar y_1 d z_1 ,\quad
\omega_k= dw_k - \bar z_{k-1}dy_{k-1} + x_{k-1}dy_k \quad (k = 2,\ldots,n).
$$
The structure equations become
$$
d(dx_j)=d(dz_j)=0,\qquad (j =1,\ldots,n-1) 
$$
$$
d(dy_j)=0,\qquad (j =1,\ldots,n)
$$
$$
\del\omega_1=0,\quad 
\delbar\omega_1=dz_1\wedge d\bar y_1
$$
$$
\del\omega_j=dx_{j-1}\wedge dy_j,\quad
 \delbar\omega_j=dy_{j-1}\wedge d\bar z_{j-1}
 \qquad (j =2,\ldots,n)
$$
In \cite{bigalke-rollenske} the authors show that that the Fr\"olicher spectral sequence of $X^{4n-2}$ has non-vanishing differential $d_n$, namely the Fr\"olicher spectral sequence does not degenerate at the $E_n$ term.

We now rename the forms $dx_j$, $dy_j$, $dz_j$, and $\omega_j$ by considering the basis of $(1,0)$-forms $\{\phi^j\}_{j=1}^{4n-2}$, defined as follows
\begin{align*}
\phi^j:=\begin{cases}
dx_j,\quad &1\leq j< n\\
dy_j, \quad &n\leq j< 2n\\
dz_j, \quad &2n\leq j < 3n-1\\
\omega_j, \quad &3n-1\leq j\leq  4n-2.
\end{cases}
\end{align*}
As a result, the structure equations become
\begin{align}\label{eq:struct_eq_phi}
d\phi^j=
\begin{cases}
0, \quad &1\leq j<3n-1\\
\phi^{2n}\wedge\overline{\phi}^{n},\quad &j=3n-1\\
\phi^{j-3n+1}\wedge\phi^{j-2n+1}+\phi^{j-2n}\wedge\overline{\phi}^{j-n}, \quad &3n\leq j\leq 4n-2,
\end{cases}
\end{align}
or, more precisely,
\begin{align}\label{eq:struct_eq_phi_del}
\del\phi^j=
\begin{cases}
0, \quad &1\leq j\leq 3n-1\\
\phi^{j-3n+1}\wedge\phi^{j-2n+1}, \quad &3n\leq j\leq 4n-2,
\end{cases}
\end{align}
and
\begin{align}\label{eq:struct_eq_phi_delbar}
\delbar\phi^j=
\begin{cases}
0, \quad &1\leq j<3n-1\\
\phi^{2n}\wedge\overline{\phi}^{n},\quad &j=3n-1\\
\phi^{j-2n}\wedge\overline{\phi}^{j-n}, \quad &3n\leq j\leq 4n-2.
\end{cases}
\end{align}

\subsection{Special Hermitian metrics}

Now we study the existence of special Hermitian metrics on Bigalke and Rollenske's nilmanifolds. In particular, one can apply Theorem \ref{thm:no-n-k-kahler} and get immediately the following

\begin{prop}
For every $n\geq 2$ the Bigalke and Rollenske's nilmanifold $X^{4n-2}$ does not admit any $n$-K\"ahler form.
\end{prop}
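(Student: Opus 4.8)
The plan is to obtain the statement as a direct specialization of Theorem~\ref{thm:no-n-k-kahler}, so the entire argument reduces to matching the data of $X^{4n-2}$ to the hypotheses of that theorem. The complex dimension of $X^{4n-2}$ is $4n-2$, which is the ambient dimension to be plugged into the theorem (playing the role of its ``$n$''), and the renamed co-frame $\{\phi^j\}_{j=1}^{4n-2}$ is the candidate co-frame of left-invariant $(1,0)$-forms. First I would verify that this co-frame is compatible with the nilpotency ordering. Inspecting \eqref{eq:struct_eq_phi}, every $d\phi^j$ is a sum of wedge products whose indices are strictly smaller than $j$: for $j=3n-1$ the indices $2n$ and $n$ both satisfy $2n,\,n<3n-1$ whenever $n\geq 2$, and for $3n\leq j\leq 4n-2$ the largest index appearing is $j-n<j$. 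Hence $d\phi^j\in\Lambda^2\langle\phi^1,\dots,\phi^{j-1},\bar\phi^1,\dots,\bar\phi^{j-1}\rangle$ for all $j$, which is exactly the nilpotency criterion recalled before the theorem.

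Second, I would read off the index $k$ that separates closed from non-closed forms. From \eqref{eq:struct_eq_phi} one has $d\phi^j=0$ precisely for $1\leq j\leq 3n-2$ and $d\phi^j\neq 0$ for $3n-1\leq j\leq 4n-2$; in particular $\phi^{3n-1},\dots,\phi^{4n-2}$ are the $n$ non-closed forms, each visibly nonzero because the $(2,0)$-part $\phi^{j-3n+1}\wedge\phi^{j-2n+1}$ (respectively $\phi^{2n}\wedge\bar\phi^{n}$ for $j=3n-1$) does not vanish. Thus the index of the theorem is $k=3n-2$, and the forbidden degree is (ambient dimension) minus $k$, i.e.
$$
(4n-2)-(3n-2)=n.
$$
Applying Theorem~\ref{thm:no-n-k-kahler} then yields immediately that $X^{4n-2}$ admits no $n$-K\"ahler form.

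I do not expect any genuine obstacle beyond the index bookkeeping, since all the content sits in the general theorem. The only point that deserves care is checking that the renaming of $dx_j, dy_j, dz_j, \omega_j$ into $\{\phi^j\}$ simultaneously respects the ascending nilpotency ordering \emph{and} produces a clean dichotomy (the first $3n-2$ forms closed, the last $n$ non-closed) with no closed form appearing after a non-closed one, as the definition of $k$ requires. Because \eqref{eq:struct_eq_phi} displays this split explicitly, both conditions hold by inspection and no further computation is needed.
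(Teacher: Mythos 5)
Your proposal is correct and takes exactly the same route as the paper: the paper states this proposition as an immediate application of Theorem~\ref{thm:no-n-k-kahler} to the co-frame $\{\phi^j\}_{j=1}^{4n-2}$, with the same identification $k=3n-2$ giving the forbidden degree $(4n-2)-(3n-2)=n$. Your explicit checks of the nilpotency ordering and of the closed/non-closed dichotomy in \eqref{eq:struct_eq_phi} are precisely the bookkeeping the paper leaves implicit.
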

In fact, we can show more, namely there are no $p$-K\"ahler forms except for balanced metrics.\\
More precisely, we start by proving the following

\begin{theorem}\label{thm:br-p-kahler}
For every $n\geq 2$ the Bigalke and Rollenske's nilmanifold $X^{4n-2}$ does not admit any $p$-K\"ahler form for $1\leq p< 4n-3$.
\end{theorem}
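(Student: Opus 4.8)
The plan is to apply Lemma \ref{lemma:hind-medori-tomassini}. Write $N:=4n-2$ for the complex dimension, so that the role of $n$ in the Lemma is played by $N$. Fix $p$ with $1\le p\le 4n-4$ and set $q:=N-p$, so that $2\le q\le 4n-3$; it then suffices to produce, for each such $q$, a non-closed $(2q-1)$-form $\eta$ whose $(q,q)$-component of $d\eta$ is a single monomial $\pm\,\phi^{I}\wedge\bar\phi^{I}$ for some subset $I\subseteq\{1,\dots,N\}$ with $|I|=q$. Indeed $\psi:=\phi^{I}$ is simple and a single term trivially satisfies the same-sign hypothesis of the Lemma (its sign being irrelevant).

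The first simplification I would make is to look for $\eta$ of pure bidegree $(q,q-1)$. Then $\del\eta$ has bidegree $(q+1,q-1)$ and contributes nothing to the $(q,q)$-component, so $(d\eta)^{q,q}=\delbar\eta$ and only $\delbar$ must be computed. The second ingredient is the structural remark, read off from \eqref{eq:struct_eq_phi_del}--\eqref{eq:struct_eq_phi_delbar}, that the differential of every generator involves only the \emph{closed} indices $1,\dots,3n-2$: explicitly $\delbar\phi^{3n-1}=\phi^{2n}\wedge\bar\phi^{n}$, while for the non-closed conjugate generators $\delbar\bar\phi^{a}=\overline{\del\phi^{a}}=\bar\phi^{a-3n+1}\wedge\bar\phi^{a-2n+1}$ for $3n\le a\le 4n-2$, whose output indices $a-3n+1\le n-1$ and $a-2n+1\le 2n-1$ are again closed.

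Concretely, I would choose the index set $I$ so that $\{n,2n\}\subseteq I$, $3n-1\notin I$, $|I|=q$, and then set
\[
\eta:=\phi^{A}\wedge\bar\phi^{B},\qquad A:=(I\setminus\{2n\})\cup\{3n-1\},\qquad B:=I\setminus\{n\},
\]
(ordered in any fixed way, absorbing the resulting sign), a $(q,q-1)$-form since $|A|=q$ and $|B|=q-1$. Expanding $\delbar\eta$ by the Leibniz rule, exactly one factor carries the \emph{designated} contribution: differentiating $\phi^{3n-1}$ replaces it by $\phi^{2n}\wedge\bar\phi^{n}$, and since $2n\notin A$ and $n\notin B$ this term equals $\pm\,\phi^{(A\setminus\{3n-1\})\cup\{2n\}}\wedge\bar\phi^{B\cup\{n\}}=\pm\,\phi^{I}\wedge\bar\phi^{I}$. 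The heart of the argument is to show that \emph{every other term vanishes}: differentiating a holomorphic factor $\phi^{a}$ with $a\in I\cap\{3n,\dots,4n-2\}$ inserts $\phi^{a-2n}\wedge\bar\phi^{a-n}$, and differentiating an antiholomorphic factor $\bar\phi^{a}$ inserts $\bar\phi^{a-3n+1}\wedge\bar\phi^{a-2n+1}$; in each case the inserted holomorphic (resp.\ antiholomorphic) index is a closed one, so the term dies by repetition as soon as $a-2n\in A$ (resp.\ $a-3n+1\in B$). The main obstacle is thus purely combinatorial bookkeeping, namely guaranteeing these repetitions.

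It remains to exhibit, for every $q\in\{2,\dots,4n-3\}$, such an $I$. The clean dichotomy is: if $2\le q\le 3n-2$, take $I\subseteq\{1,\dots,3n-2\}$ with $\{n,2n\}\subseteq I$ and no non-closed index, so the vanishing conditions are vacuous and one recovers exactly the mechanism of Theorem \ref{thm:no-n-k-kahler}; if $3n-1\le q\le 4n-3$, take $I=\{1,\dots,3n-2\}\cup J$ with $J\subseteq\{3n,\dots,4n-2\}$ of size $q-(3n-2)$, so that \emph{all} closed indices lie in $I$ and hence the outputs $a-2n$ and $a-3n+1$ automatically belong to $I$, killing every undesired term. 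In either case $(d\eta)^{q,q}=\pm\,\phi^{I}\wedge\bar\phi^{I}$, and Lemma \ref{lemma:hind-medori-tomassini} yields the absence of a $p$-K\"ahler form for the corresponding $p=N-q$, covering all $1\le p\le 4n-4$.
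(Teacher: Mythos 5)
Your proposal is correct, and I verified the combinatorics in both branches of your dichotomy: in the range $2\le q\le 3n-2$ the sets $A,B$ contain no index $\ge 3n$, so the only Leibniz term in $\delbar\eta$ is the designated one coming from $\delbar\phi^{3n-1}$; in the range $3n-1\le q\le 4n-3$ every inserted holomorphic index $a-2n\in[n,2n-2]$ lies in $A$ (it never equals $2n$) and every inserted antiholomorphic index $b-3n+1\in[1,n-1]$ lies in $B$ (it never equals $n$), so all undesired terms die by repetition. This covers exactly $1\le p\le 4n-4$, as required. Your route uses the same key lemma (Lemma \ref{lemma:hind-medori-tomassini}) and the same general scheme as the paper --- invariant monomial forms of pure bidegree $(q,q-1)$ so that only one Leibniz term survives --- but the concrete construction is genuinely different and, I would say, tidier. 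The paper pivots on the top generator, exploiting $\delbar\phi^{4n-2}=\phi^{2n-2}\wedge\overline{\phi}^{3n-2}$, and has to split into the cases $1\le p<n$ and $n\le p\le 4n-4$, the latter with a further sub-split at $p=3n-3$ because the removal pattern collides with the index $2n-2$ already deleted; this produces several separately displayed forms $\eta_1,\eta_p,\eta_n$ whose verification requires re-inspecting the structure equations each time. You instead pivot uniformly on $\delbar\phi^{3n-1}=\phi^{2n}\wedge\overline{\phi}^{n}$, and isolate once and for all the two facts that do the real work: that $(d\eta)^{q,q}=\delbar\eta$ for $\eta$ of bidegree $(q,q-1)$, and that every output index of the structure equations \eqref{eq:struct_eq_phi_del}--\eqref{eq:struct_eq_phi_delbar} is a closed index ($\le 3n-2$). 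This buys a single two-line recipe ($A=(I\setminus\{2n\})\cup\{3n-1\}$, $B=I\setminus\{n\}$) valid for all $p$ at once, with the case distinction demoted to the mere choice of the index set $I$; it also makes transparent that the low-$q$ branch is literally the mechanism of Theorem \ref{thm:no-n-k-kahler}. The paper's approach buys nothing extra here beyond matching the specific forms it later reuses; your argument could be substituted for it without loss.
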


\begin{proof}
We will show that on any Bigalke and Rollenske's manifold $X^{4n-2}$, for every fixed $p$, with $1\leq p< 4n-3$, we can construct a non closed $(8n-2p-5)$-form $\eta_p$ such that the $(4n-2-p,4n-2-p)$-component of $d\eta_p$ satisfies
\begin{equation}\label{eq:hind_medori_tomassini}
(d\eta_p)^{(4n-2-p,4n-2-p)}=\epsilon_p\psi_p\wedge\overline{\psi_p},
\end{equation}
with $\psi_p\in A^{4n-2-p,0}(X^{4n-2})$ a simple form and $\epsilon_p\in\left\lbrace-1,1\right\rbrace$. By Lemma \ref{lemma:hind-medori-tomassini}, this will assure that there exists no $p$-K\"ahler form on $X^{4n-2}$.\\
Let us consider separately the cases\\
$(i)$ $1\leq p <n$;\\
$(ii)$ $n\leq p\leq 4n-2$.\\
Before doing so, we remark that, by structure equations (\ref{eq:struct_eq_phi}), the index $j$ such that every term of the expression of $d\phi^j$ contains forms with the highest indices, is $j=4n-2$. Such expression is
\[
d\phi^{4n-2}=\phi^{n-1}\wedge\phi^{2n-1}+\phi^{2n-2}\wedge\overline{\phi}^{3n-2},
\]
whereas, in general, we have that $d\phi^j$, $d\overline{\phi}^j\neq 0$ if, and only if, $3n-1\leq j\leq 4n-2$.

$(i)$ 
Even though it is well-known that on non-toral nilmanifolds there are no $1$-Kahler forms, since they coincide with K\"ahle metrics, we will consider the case $p=1$ for the benefit of the following constructions. We must construct a non closed $(8n-7)$-form satisfying property (\ref{eq:hind_medori_tomassini}). In particular, if we start from the $(8n-4)$-form
\[
\phi^{1}\wedge\dots\wedge\phi^{4n-2}\wedge\overline{\phi}^1\wedge\dots\wedge\overline{\phi}^{4n-2},
\]
we must remove three $1$-forms. For this purpose, we select $\phi^{2n-2}$, $\overline{\phi}^{3n-2}$, and $\overline{\phi}^{4n-2}$, therefore considering the $(4n-3,4n-4)$-form $\eta_1$ given by
\[
\eta_1=\phi^1\wedge\dots\wedge\hat{\phi^{2n-2}}\wedge\dots\wedge\phi^{4n-2}\wedge\overline{\phi}^1\wedge\dots\wedge\hat{\overline{\phi}^{3n-2}}\wedge\dots\wedge\overline{\phi}^{4n-3}.
\]
We now compute the $(4n-3,4n-3)$-component of $d\eta_1$. By the structure equations (\ref{eq:struct_eq_phi_delbar}), we remark that the only non trivial  relevant differentials are
\begin{align*}
&\delbar\phi^{3n-1}=\phi^{2n}\wedge\overline{\phi}^n,\\
&\delbar\phi^j=\phi^{j-2n}\wedge\overline{\phi}^{j-n},\quad 3n\leq j\leq 4n-2.
\end{align*}
In order to have a non vanishing term, we must ensure that $\delbar\phi^j=\phi^{2n-2}\wedge\overline{\phi}^{3n-2}$. However, this can happen if and only if $j=4n-2$, resulting in
\begin{align*}
d\eta_1^{(4n-3,4n-3)}&=d\left(\phi^1\wedge\dots\wedge\hat{\phi^{2n-2}}\wedge\dots\wedge\phi^{4n-2}\wedge\overline{\phi}^1\wedge\dots\wedge\hat{\overline{\phi}^{3n-2}}\wedge\dots\wedge\overline{\phi}^{4n-3}\right)\\
&=\phi^{1}\wedge\dots\wedge\phi^{4n-3}\wedge\overline{\phi}^1\wedge\dots\wedge\overline{\phi}^{4n-3}.
\end{align*}
Thus, considering $\psi_1:=\phi^1\wedge\dots\wedge^{4n-3}\in A^{4n-3,0}(X^{4n-2})$, we can conclude by Lemma \ref{lemma:hind-medori-tomassini}.\\
Therefore, for the case $1< p < n$, we can construct $\eta_p$ starting from the $(8n-7)$-form $\eta_1$ and then remove the forms $\phi^{3n-1}, \phi^{3n},\dots\phi^{3n+p-3},\overline{\phi}^{3n-1},\overline{\phi}^{3n},\dots,\overline{\phi}^{3n+p-3}$, (which accounts to removing $2p-2$ forms), obtaining a  non closed $(8n-2p-5)$-form. Then, the $(4n-2-p,4n-2-p)$-component of $d\eta_p$ is of type
\[
\psi_p\wedge\overline{\psi}_p,
\]
with $\psi_p\in A^{4n-2-p,0}(X^{4n-2})$  given by
\[
\psi_p=\phi^1\wedge\dots\wedge\phi^{3n-2}\wedge\phi^{3n+p-2}\wedge\dots\wedge\phi^{4n-3}.
\]
%\[
%\eta_p=\phi^1\wedge\dots\wedge\phi^{\hat{2n-2}}\wedge\dots\wedge\phi^{3n-2}\wedge\phi^{\hat{3n-1}}\wedge\phi^{\hat{3n}}\wedge\dots\phi^{\hat{3n+p-2}}\wedge\phi^{3n+p-1}\wedge\dots\wedge\phi^{4n-2}
%\]
Again, we can conclude by Lemma \ref{lemma:hind-medori-tomassini}.\\
$(ii)$ Let us now consider the case $n\leq p \leq 4n-2$, starting from $p=n$ for the benefit of the following construction.\\
We must find a $(6n-5)$-form $\eta_n$ such that the $(3n-2,3n-2)$-component of $d\eta_n$ satisfies condition (\ref{eq:hind_medori_tomassini}). We construct the form $\eta_n$ as we have previously done, setting
\[
\eta_n=\phi^1\wedge\dots\wedge\phi^{\hat{2n-2}}\wedge\dots\wedge\phi^{3n-2}\wedge\phi^{4n-2}\wedge\overline{\phi}^1\wedge\dots\wedge\overline{\phi}^{3n-3},
\]
with $\eta_n\in A^{3n-2,3n-3}(X^{4n-2})$. By structure equations, we see that we have removed all the forms with non trivial differential but $d\phi^{4n-2}$. Therefore, when computing the differential $d\eta_n$, we obtain
\[
d\eta_n=-\phi^1\wedge\dots\wedge\phi^{3n-2}\wedge\overline{\phi}^1\wedge\dots\wedge\overline{\phi}^{3n-2}.
\]
By setting $\psi_n:=\phi^1\wedge\dots\wedge\phi^{3n-2}$, we conclude by Lemma \ref{lemma:hind-medori-tomassini}.\\
Now, if $n+1\leq p<4n-3$, we construct the $(8n-2p-5)$-form $\eta_p$ starting from the $(6n-5)$-form $\eta_n$ and then removing the forms $\phi^1,\dots,\phi^{p-n},\overline{\phi}^1,\dots,\overline{\phi}^{p-n}$. We clarify that, for $p-n\geq 2n-2$, since $\phi^{2n-2}$ has already been removed, we keep removing the $(1,0)$-forms with higher index starting from $\phi^{2n-1}$, whereas we keep $\overline{\phi}^{2n-2}$ and remove $\overline{\phi}^{2n-1}$ and so forth, so to remove $(1,0)$-forms for a total of $p-n$ forms and $(0,1)$-forms for a total of $p-n$ forms. This procedure accounts to building $\eta_p\in A^{4n-p-2,4n-p-3}$ as
\begin{align*}
\eta_{p}=
\phi^{p-n+1}\wedge\dots\wedge\phi^{\hat{2n-2}}\wedge\dots\wedge\phi^{3n-2}\wedge\phi^{4n-2}\wedge\overline{\phi}^{p-n+1}\wedge\dots\wedge\overline{\phi}^{3n-3}
\end{align*}
if $n+1\leq p <3n-3$, and
\begin{align*}
\eta_{p}=\phi^{p-n+2}\wedge\dots\wedge\phi^{3n-2}\wedge\phi^{4n-2}\wedge\overline{\phi}^{2n-2}\wedge\overline{\phi}^{p-n+1}\wedge\dots\wedge\overline{\phi}^{3n-3},
\end{align*}
if $3n-3\leq p\leq 4n-4$. We then compute $d\eta_{p}$. Since the only non trivial differential is $d\phi^{4n-2}$, we obtain
\[
d\eta_{p}=\epsilon_p\phi^{p-n+1}\wedge\dots\wedge\dots\wedge\phi^{3n-2}\wedge\overline{\phi}^{p-n+2}\wedge\dots\wedge\overline{\phi}^{3n-2}
\]
if $n+1\leq p <3n-3$, and
\[
d\eta_p=\epsilon_p\phi^{2n-2}\wedge\phi^{p-n+1}\wedge\dots\wedge\phi^{3n-2}\wedge\overline{\phi}^{2n-2}\wedge\overline{\phi}^{p-n+2}\wedge\dots\wedge\overline{\phi}^{3n-3},
\]
if $3n-3\leq p\leq 4n-4$. The number $\epsilon_p\in\{\pm 1\}$ is a sign term.  Therefore, by setting $$\psi_p=\phi^{p-n+1}\wedge\dots\wedge\dots\wedge\phi^{3n-2}$$ for $n+1\leq p <3n-3$ and $$\psi_p=\phi^{2n-2}\wedge\phi^{p-n+1}\wedge\dots\wedge\phi^{3n-2},$$ if $3n-3\leq p\leq 4n-4$, we can finally conclude by Lemma \ref{lemma:hind-medori-tomassini}.
\end{proof}

However, we show that there exist $(4n-3)$-K\"ahler forms. More precisely, we prove the following

\begin{theorem}\label{thm:main}
For every $n\geq 2$ the Bigalke and Rollenske's nilmanifold $X^{4n-2}$ admits balanced metrics.
\end{theorem}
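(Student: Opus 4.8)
The plan is to exhibit an explicit left-invariant diagonal Hermitian metric and verify directly that it is balanced. Concretely, I would take the metric whose fundamental form is
$$
\omega = i\sum_{j=1}^{4n-2}\phi^{j\bar j},
$$
which is manifestly a positive $(1,1)$-form and hence a Hermitian metric on $X^{4n-2}$. Since the complex dimension is $4n-2$, being balanced means $d\omega^{4n-3}=0$, so the whole problem reduces to showing that the $(4n-3,4n-3)$-form $\omega^{4n-3}$ is $d$-closed.

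First I would expand the power. Writing $e_j:=i\,\phi^{j\bar j}$, each $e_j$ is a real $2$-form, the $e_j$ commute, and $e_j\wedge e_j=0$; hence in the multinomial expansion of $\omega^{4n-3}=(\sum_j e_j)^{4n-3}$ only the square-free monomials survive, giving
$$
\omega^{4n-3}=(4n-3)!\sum_{a=1}^{4n-2}\ \bigwedge_{j\neq a} e_j .
$$
Thus it suffices to prove that each form $P_{\hat a}:=\bigwedge_{j\neq a}e_j$ is closed. Differentiating, $dP_{\hat a}=\sum_{l\neq a}\big(\bigwedge_{j\neq a,l}e_j\big)\wedge de_l$, and by the structure equations (\ref{eq:struct_eq_phi}) the only nonzero $de_l$ occur for $l\in\{3n-1,\dots,4n-2\}$, where $de_l=i\big(d\phi^l\wedge\overline{\phi}^l-\phi^l\wedge d\overline{\phi}^l\big)$ is a sum of a $(2,1)$- and a $(1,2)$-form.

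The heart of the argument, and the step I expect to require the most care, is a purely combinatorial index count showing that each summand $\big(\bigwedge_{j\neq a,l}e_j\big)\wedge de_l$ already vanishes. Here one uses crucially the nilpotency of $J$: every term of $d\phi^l$ lies in $\Lambda^2\langle\phi^1,\dots,\phi^{l-1},\overline{\phi}^1,\dots,\overline{\phi}^{l-1}\rangle$. I would split the two contributions to the $(2,1)$-part of $de_l$. The $(2,0)$-part $\phi^{l-3n+1}\wedge\phi^{l-2n+1}$ of $d\phi^l$, wedged against $\overline{\phi}^l$, would have to supply the two holomorphic $1$-forms missing from $\bigwedge_{j\neq a,l}e_j$, namely $\phi^a$ and $\phi^l$; but both indices $l-3n+1$ and $l-2n+1$ are strictly smaller than $l$, so $\phi^l$ can never be produced and the wedge vanishes. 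The $(1,1)$-part $\phi^{l-2n}\wedge\overline{\phi}^{l-n}$ contributes, through $-\phi^l\wedge d\overline{\phi}^l$, a factor proportional to $\phi^l\wedge\phi^{l-n}\wedge\overline{\phi}^{l-2n}$: restoring $\phi^l$ forces the missing holomorphic index to be $a=l-n$, but then the extra antiholomorphic factor $\overline{\phi}^{l-2n}$ duplicates one already present in $\bigwedge_{j\neq a,l}e_j$ (as $l-2n\neq a,l$), so again the wedge vanishes. The remaining case $l=3n-1$, where $d\phi^{3n-1}=\phi^{2n}\wedge\overline{\phi}^n$ has only a $(1,1)$-part, is handled identically (the surviving antiholomorphic factor $\overline{\phi}^{2n}$ duplicates). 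Since $de_l$ is real, its $(1,2)$-part gives the conjugate terms and vanishes for the same reasons.

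Consequently every $P_{\hat a}$ is closed, hence $\omega^{4n-3}$ is closed and $\omega$ is a balanced metric, which proves the theorem. Combined with Theorem \ref{thm:br-p-kahler}, this shows that on $X^{4n-2}$ the balanced metrics are the only $p$-K\"ahler structures.
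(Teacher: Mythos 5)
Your proposal is correct and follows essentially the same route as the paper: the same diagonal metric (up to a harmless scaling constant), the same expansion of $\omega^{4n-3}$ into the square-free monomials $\bigwedge_{j\neq a}e_j$, and the same closedness check for each monomial via the structure equations. The only difference is that you spell out the index-duplication argument (using nilpotency to rule out $\phi^l$ appearing in $d\phi^l$) in more detail, whereas the paper compresses it into the observation that no $d\phi^j$ or $d\overline{\phi}^j$ contains a term of the form $\phi^k\wedge\overline{\phi}^k$.
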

\begin{proof}
%We show that the diagonal Hermitian metric
%$$
%\omega:=\frac{i}{2}\left(
%\sum_{j=1}^{n-1}dx_j\wedge d\bar x_j+
%\sum_{j=1}^{n}dy_j\wedge d\bar y_j+
%\sum_{j=1}^{n-1}dz_j\wedge d\bar z_j+
%\sum_{j=1}^{n}\omega_j\wedge \bar \omega_j
%\right)
%$$
%is balanced. 
%\textcolor{red}{Indeed...}
%
We show that the diagonal Hermitian metric
$$
\omega:=\frac{i}{2}
\sum_{j=1}^{4n-2}\phi^j\wedge\overline{\phi}^j
$$
is balanced, i.e., $d\omega^{4n-3}=0$. Notice that
\[
\omega^{4n-3}=\left(\frac{i}{2}\right)^{4n-3}\frac{1}{(4n-3)!}\sum_{k=1}^{4n-2}\phi^{1}\wedge\overline{\phi}^1\wedge\dots\wedge\hat{\phi}^k\wedge\hat{\overline{\phi}}^k\wedge\dots\wedge \phi^{4n-2}\wedge\overline{\phi}^{4n-2}.
\]
We denote by $\alpha_k:=\phi^{1}\wedge\overline{\phi}^1\wedge\dots\wedge\hat{\phi}^k\wedge\hat{\overline{\phi}}^k\wedge\dots\wedge \phi^{4n-2}\wedge\overline{\phi}^{4n-2}$.
From the structure equations, when we compute $d\omega^{4n-3}$ we consider separately each term
\begin{equation*}
d\alpha_k=d(\phi^{1}\wedge\overline{\phi}^1\wedge\dots\wedge\hat{\phi}^k\wedge\hat{\overline{\phi}}^k\wedge\dots\wedge \phi^{4n-2}\wedge\overline{\phi}^{4n-2}).
\end{equation*}
By the structure equations we have that $d\alpha_k=0$ for every $k=1,\dots,4n-2$. Indeed, by Leibniz rule, the only way to have $d\alpha^k\neq 0$ would be that for some index $j=1,\dots,\hat{k},\dots,4n-2$, $d\phi^j$ or $d\bar\phi^j$ contains exactly $\phi^k\wedge\bar\phi^k$. But this is not the case as showed by the structure equations. Hence, $d\alpha_k=0$ for every $k=1,\dots,4n-2$ and so
$d\omega^{4n-3}=0$ and so $\omega$ is balanced.\\
\end{proof}

As a consequence, combining this with \cite[Theorem 1]{bigalke-rollenske}, we get that there is no relation between the existence of balanced metrics and the degeneracy step of the Fr\"olicher spectral sequence.
\begin{cor}\label{cor:balanced-degeneration}
On balanced manifolds the degeneracy step of the Fr\"olicher spectral sequence can be arbitrarily large.
\end{cor}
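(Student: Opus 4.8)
The plan is to obtain the corollary as a direct consequence of Theorem \ref{thm:main} together with the non-degeneration result of Bigalke and Rollenske: the family $\{X^{4n-2}\}_{n\ge 2}$ will serve as an explicit sequence of balanced manifolds whose Fr\"olicher degeneracy steps are unbounded.

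First I would record that the whole family consists of balanced manifolds. By Theorem \ref{thm:main}, for every $n\ge 2$ the nilmanifold $X^{4n-2}$ admits a balanced metric, namely the diagonal Hermitian metric $\omega=\frac{i}{2}\sum_{j}\phi^{j}\wedge\overline{\phi}^{j}$. Thus each $X^{4n-2}$ is a balanced manifold, and this holds uniformly in $n$.

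Next I would pin down the bookkeeping for the degeneracy step. Writing $(E_r,d_r)$ for the Fr\"olicher (Hodge--de Rham) spectral sequence, the degeneracy step is the least $r_0$ with $E_{r_0}=E_\infty$, equivalently the least $r_0$ such that $d_r=0$ for all $r\ge r_0$. By \cite[Theorem 1]{bigalke-rollenske}, recalled above, the differential $d_n$ of the Fr\"olicher spectral sequence of $X^{4n-2}$ is non-zero; hence $E_n\neq E_{n+1}$, so the sequence has not stabilized at the $n$-th page and the degeneracy step of $X^{4n-2}$ is at least $n+1$.

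Finally, since $n\ge 2$ is arbitrary, the balanced manifolds $X^{4n-2}$ realize degeneracy steps $\ge n+1$, which tend to infinity as $n\to\infty$; this is precisely the assertion that the degeneracy step on balanced manifolds can be arbitrarily large. I expect no genuine analytic obstacle in this argument: all the substance has already been carried out, namely the existence of the balanced metric in Theorem \ref{thm:main} and the non-vanishing of $d_n$ in \cite{bigalke-rollenske}. The only point demanding care is the conventional one of translating ``$d_n\neq 0$'' into a strict lower bound on the degeneracy step, so that the family genuinely exhibits unbounded values rather than a fixed bound.
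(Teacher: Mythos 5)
Your proposal is correct and is exactly the paper's argument: the corollary follows by combining Theorem \ref{thm:main} (the manifolds $X^{4n-2}$ are balanced) with \cite[Theorem 1]{bigalke-rollenske} (their Fr\"olicher spectral sequence has $d_n\neq 0$), so the family realizes arbitrarily large degeneracy steps. Your extra care in converting ``$d_n\neq 0$'' into the lower bound $n+1$ on the degeneracy step is a harmless sharpening of the same reasoning.
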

In particular, this is in contrast with the situation in K\"ahler geometry where for compact K\"ahler manifolds the Fr\"olicher spectral sequence degenerates at the first step and with a conjecture by Popovici stating that on compact SKT manifolds the Fr\"olicher spectral sequence degenerates at the second step (cf. \cite[Conjecture 1.3]{popovici}). For completeness, we recall that an \emph{SKT} (or \emph{pluriclosed}) metric on a complex manifold is an Hermitian metric $\omega$ such that $\del\delbar\omega=0$.\\
In fact, in relation with this conjecture we show explicitly the following
\begin{prop}
For every $n\geq 2$ the Bigalke and Rollenske's nilmanifold $X^{4n-2}$ does not admit any SKT metric.
\end{prop}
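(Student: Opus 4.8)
The plan is to show that no Hermitian metric $\omega$ on $X^{4n-2}$ can satisfy $\del\delbar\omega=0$, by exploiting the fact that symmetric-space averaging lets us reduce to left-invariant metrics. First I would invoke the standard Belgun-type symmetrization argument: since $X^{4n-2}=\Gamma\backslash G_n$ is a nilmanifold, any Hermitian metric can be averaged over $G_n$ (using the bi-invariant Haar measure on the compact quotient) to produce a left-invariant Hermitian metric whose SKT defect $\del\delbar\omega$ is the average of the original one. Because $\del\delbar$ commutes with the averaging operator, if some SKT metric existed then a left-invariant SKT metric would exist as well. Hence it suffices to rule out left-invariant SKT metrics, and these are determined by a constant positive-definite Hermitian matrix $(h_{j\bar k})$ in the coframe $\{\phi^j\}$, so the whole problem becomes a finite linear-algebra computation using the structure equations \eqref{eq:struct_eq_phi_del} and \eqref{eq:struct_eq_phi_delbar}.

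The core computation is then to write the general left-invariant $(1,1)$-form $\omega=i\sum_{j,k}h_{j\bar k}\,\phi^j\wedge\overline{\phi}^k$ with $(h_{j\bar k})$ Hermitian positive definite, compute $\del\delbar\omega$ as a left-invariant $(2,2)$-form, and extract the condition $\del\delbar\omega=0$ as a system of linear equations in the entries $h_{j\bar k}$. I would organize the calculation around the only nonzero differentials, namely $d\phi^j\ne 0$ precisely for $3n-1\le j\le 4n-2$, with $\del\phi^j=\phi^{j-3n+1}\wedge\phi^{j-2n+1}$ and $\delbar\phi^j=\phi^{j-2n}\wedge\overline{\phi}^{j-n}$ (and the special term $\delbar\phi^{3n-1}=\phi^{2n}\wedge\overline{\phi}^n$). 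Applying $\delbar$ to $\del\omega$ and $\del$ to $\delbar\omega$ produces $(2,2)$-forms whose monomials I would match against the basis $\{\phi^{ab}\wedge\overline{\phi}^{cd}\}$; the SKT equations arise by setting each coefficient to zero. The expected outcome is that some diagonal coefficient — specifically one coming from the incompatible interaction between the holomorphic term $\del\phi^j=\phi^{j-3n+1}\wedge\phi^{j-2n+1}$ and the anti-holomorphic term $\delbar\phi^j=\phi^{j-2n}\wedge\overline{\phi}^{j-n}$ for the indices $3n-1\le j\le 4n-2$ — forces a relation of the form (positive combination of diagonal entries $h_{j\bar j}$) $=0$, which is impossible for a positive-definite metric.

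The main obstacle is purely bookkeeping: there are two families of nontrivial differentials and the cross terms in $\del\delbar\omega$ involve products of a $\del\phi$ with a $\delbar\overline{\phi}$, so one must carefully track signs and index shifts to confirm that the obstruction really is a \emph{definite} (single-signed) combination of the $h_{j\bar j}$ rather than an indefinite one that a suitable nondiagonal metric could cancel. To control this I would first restrict attention to the coefficient of a single well-chosen $(2,2)$-monomial — e.g. one built from the highest-index relation $d\phi^{4n-2}=\phi^{n-1}\wedge\phi^{2n-1}+\phi^{2n-2}\wedge\overline{\phi}^{3n-2}$ — and show that isolating that coefficient already yields $0=\text{(positive sum of }h_{j\bar j})$, independent of the off-diagonal entries. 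Concretely, the term $\delbar\del\phi^{4n-2}$ pairs the holomorphic factors $\phi^{n-1},\phi^{2n-1}$ with the anti-holomorphic derivative of the metric, and the only way the resulting $(2,2)$-form can cancel against contributions from other basis forms is if a positive-diagonal combination vanishes; this single obstruction suffices to conclude that no SKT metric exists, without solving the full linear system.
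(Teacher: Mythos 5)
Your approach is correct, but it takes a genuinely different route from the paper's. The paper does not classify metrics at all: it invokes Egidi's characterization of SKT metrics in terms of currents, exhibiting an explicit $(4n-4,4n-4)$-form whose $\del\delbar$ equals $\phi^1\wedge\bar\phi^1\wedge\dots\wedge\phi^{4n-3}\wedge\bar\phi^{4n-3}$; this positive $(4n-3,4n-3)$-form then defines a non-zero positive $\del\delbar$-exact $(1,1)$-current, which is exactly Egidi's obstruction to the existence of any SKT metric. Your route --- Belgun-type symmetrization to reduce to left-invariant metrics, followed by a finite linear-algebra computation --- also closes, and in fact more cleanly than you anticipate. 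The reduction step is standard: averaging against the bi-invariant volume preserves positivity, commutes with $d$, and (since $J$ is left-invariant) with the bidegree decomposition, hence with $\del\delbar$. For the invariant computation, note that $\del\delbar\phi^j=0$ for every $j$ (all differentials land on closed forms), so Leibniz gives $\del\delbar\omega=i\sum_{j,k}h_{j\bar k}\bigl(\delbar\phi^j\wedge\overline{\delbar\phi^k}-\del\phi^j\wedge\overline{\del\phi^k}\bigr)$. The $\delbar$-family only produces holomorphic indices in $[n,3n-2]$, while every term of the $\del$-family contains a holomorphic index in $[1,n-1]$; so the two families are supported on disjoint sets of basis monomials and cannot cancel each other --- this dissolves the ``definiteness'' worry you raise. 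Moreover, within the $\del$-family the monomial $\phi^{j-3n+1}\wedge\phi^{j-2n+1}\wedge\bar\phi^{k-3n+1}\wedge\bar\phi^{k-2n+1}$ determines $(j,k)$ uniquely, so $\del\delbar\omega=0$ forces $h_{j\bar k}=0$ for \emph{all} $j,k\geq 3n$; already the single coefficient of $\phi^{n-1}\wedge\phi^{2n-1}\wedge\bar\phi^{n-1}\wedge\bar\phi^{2n-1}$, which equals $\pm i\,h_{(4n-2)\overline{(4n-2)}}$, contradicts positive-definiteness. Two small corrections to your sketch: for a left-invariant metric the coefficients are constants, so there is no ``anti-holomorphic derivative of the metric'' in play, and the obstruction comes purely from the $\del\phi^j\wedge\overline{\del\phi^k}$ terms rather than from an interaction between $\del\phi^j$ and $\delbar\phi^j$. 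As for what each approach buys: the paper's is shorter granted Egidi's duality theorem and requires no invariance reduction, while yours is more self-contained (modulo Belgun's symmetrization) and yields the sharper information that every invariant Hermitian metric has $\del\delbar\omega\neq 0$ with an explicitly identified non-vanishing coefficient; it is also close in spirit to the paper's remark that the result follows from Fino--Vezzoni, whose proof likewise rests on the symmetrization trick.
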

\begin{proof}
In order to show that $X^{4n-2}$ does not admit any SKT metric we use the characterization of \cite{egidi} in terms of currents. More precisely, we will construct a non-zero positive $(1,1)$-current which is $\del\delbar$-exact.\\
Indeed, by a direct computation using the structure equations
%$$
%\psi:=\Lambda_{j=1}^{n-1}(dx_j\wedge d\bar x_j)\wedge
%\Lambda_{j=1}^{n}(dy_j\wedge d\bar y_j)\wedge
%\Lambda_{j=1}^{n-1}(dz_j\wedge d\bar z_j)\wedge
%\Lambda_{j=1}^{n-1}(\omega_j\wedge \bar \omega_j)=
%$$
%$$
%\del\delbar\left(
%\Lambda_{j=1}^{n-2}(dx_j\wedge d\bar x_j)\wedge
%\Lambda_{j=1}^{n-1}(dy_j\wedge d\bar y_j)\wedge
%\Lambda_{j=1}^{n-1}(dz_j\wedge d\bar z_j)\wedge
%\Lambda_{j=1}^{n}(\omega_j\wedge \bar \omega_j)
%\right)\,.
%$$
$$
\psi:=\phi^1\wedge\bar\phi^1\wedge\dots\wedge \phi^{4n-3}\wedge\bar\phi^{4n-3}
=
$$
$$
\del\delbar\left(
\phi^1\wedge\bar\phi^1\wedge\dots\wedge
\hat{\phi}^{n-1}\wedge\hat{\bar\phi}^{n-1}\wedge\dots\wedge
\hat{\phi}^{2n-1}\wedge\hat{\bar\phi}^{2n-1}\wedge\dots\wedge
\phi^{4n-2}\wedge\bar\phi^{4n-2}
\right)\,.
$$
The $(4n-3,4n-3)$-form $\psi$ gives rise to a $\del\delbar$-exact non-zero positive $(1,1)$-current on $X$.
\end{proof}
Notice that this follows also by \cite{fino-vezzoni} where the authors show that on non-tori nilmanifolds balanced and SKT metrics cannot coexist.\\
We recall that an Hermitian metric $\omega$ on a complex manifold is called \emph{locally conformally K\"ahler} if
$$
d\omega=\theta\wedge\omega
$$
where $\theta$ is a $d$-closed $1$-form.
We then notice also the following
\begin{prop}
For every $n\geq 2$ the Bigalke and Rollenske's nilmanifold $X^{4n-2}$ does not admit any locally conformally K\"ahler metric.
\end{prop}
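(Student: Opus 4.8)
The plan is to reduce the question to left-invariant data and then obstruct the existence of an invariant LCK metric by a positivity argument. First I would reduce to the invariant setting. Suppose $\omega$ is any LCK metric with Lee form $\theta$, so $d\omega=\theta\wedge\omega$ and $d\theta=0$. Since $H^1_{dR}(X^{4n-2})$ is computed by left-invariant forms, the class $[\theta]$ has an invariant representative $\theta_0$ and $\theta=\theta_0+df$ for some function $f$; replacing $\omega$ by the conformal metric $e^{-f}\omega$ we may assume the Lee form equals the invariant form $\theta_0$. Averaging $R_g^*\omega$ over the compact quotient then produces an invariant positive $(1,1)$-form $\operatorname{sym}(\omega)$, and because $\theta_0$ is invariant it factors out of the averaging, giving $d\operatorname{sym}(\omega)=\operatorname{sym}(\theta_0\wedge\omega)=\theta_0\wedge\operatorname{sym}(\omega)$. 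Hence it suffices to show that no invariant positive $(1,1)$-form $\omega=i\sum_{j,k}h_{j\bar k}\,\phi^j\wedge\overline{\phi}^k$, with $(h_{j\bar k})$ a constant positive-definite Hermitian matrix, satisfies $d\omega=\theta\wedge\omega$ for an invariant closed real $1$-form $\theta$.

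Second, I would pin down the Lee form. Writing $\theta=\theta^{1,0}+\overline{\theta^{1,0}}$ with $\theta^{1,0}=\sum_\ell a_\ell\phi^\ell$, a bidegree-by-bidegree comparison in $d\theta=0$, using the structure equations \eqref{eq:struct_eq_phi_del}--\eqref{eq:struct_eq_phi_delbar} (the $(2,0)$-part kills $a_\ell$ for $3n\le\ell\le 4n-2$, and the $(1,1)$-part kills $a_{3n-1}$), forces $\theta^{1,0}\in\langle\phi^1,\dots,\phi^{3n-2}\rangle$. The $(2,1)$-component of the LCK equation then reads $\del\omega=\theta^{1,0}\wedge\omega$.

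The heart of the argument is to isolate, in this $(2,1)$-identity, the part carrying a fixed anti-holomorphic index $\overline{\phi}^c$ with $c\ge 3n-1$. By \eqref{eq:struct_eq_phi_del}--\eqref{eq:struct_eq_phi_delbar} such a $\overline{\phi}^c$ can only be produced by the purely holomorphic differentials $\del\phi^j=\phi^{j-3n+1}\wedge\phi^{j-2n+1}$ for $3n\le j\le 4n-2$, never by the $(1,1)$-pieces $\del\overline{\phi}^k$ (whose anti-holomorphic indices are all $\le 2n$). Setting $\beta_c:=\sum_j h_{j\bar c}\,\phi^j$, comparison of the $\overline{\phi}^c$-parts yields, for each $c\in\{3n-1,\dots,4n-2\}$,
\begin{equation*}
\sum_{r=1}^{n-1}h_{(3n-1+r)\,\bar c}\;\phi^{r}\wedge\phi^{n+r}=\theta^{1,0}\wedge\beta_c .
\end{equation*}
The left-hand side is a sum over the pairwise disjoint coordinate pairs $(r,n+r)$, so it is a simple (decomposable) $2$-form only if at most one coefficient $h_{(3n-1+r)\bar c}$ is nonzero; but the right-hand side $\theta^{1,0}\wedge\beta_c$ is always simple. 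If any of these forms is nonzero, then $\theta^{1,0}$ must lie in a single coordinate plane $\langle\phi^{r_0},\phi^{n+r_0}\rangle$, and since $\theta^{1,0}$ is independent of $c$ this $r_0$ is the same for all $c$. Choosing any $r\neq r_0$ and $c=3n-1+r$ then gives $h_{(3n-1+r)\,\overline{(3n-1+r)}}=0$, a vanishing diagonal entry of a positive-definite Hermitian matrix, a contradiction. The remaining possibility $\theta^{1,0}=0$ makes $\omega$ closed, hence Kähler, which is impossible on a non-toral nilmanifold.

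I expect the main obstacle to be the low-dimensional endpoint $n=2$, where $r$ runs over the single value $r=1$ and the disjointness mechanism degenerates. There I would instead compare one further coefficient coming from $\del\overline{\phi}^{3n-1}$ (concretely, the coefficient of $\phi^{5}\wedge\phi^{2}\wedge\overline{\phi}^{4}$, which equals $i\,h_{5\bar5}$ on the left while on the right $\theta^{1,0}\wedge\omega$ cannot produce it once $\theta^{1,0}\in\langle\phi^1,\phi^3\rangle$), again forcing a diagonal entry of $(h_{j\bar k})$ to vanish. A secondary delicate point is the reduction step itself: the LCK equation is not linear in $\omega$, which is precisely why one must first normalize the Lee form to be invariant before symmetrizing, so that the invariant $\theta_0$ may be pulled out of the averaging.
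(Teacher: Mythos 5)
Your proof is correct in substance but takes a genuinely different route from the paper. The paper's proof is a two-line citation: since $X^{4n-2}$ admits balanced metrics (Theorem \ref{thm:main}), the claim follows from \cite[Theorem 4.9]{ornea-otiman-stanciu}, which says that on non-toral complex nilmanifolds with left-invariant complex structure balanced and locally conformally K\"ahler metrics cannot coexist. You instead give a direct obstruction argument: reduction to left-invariant data (Nomizu plus conformal change plus Belgun-type symmetrization, which is essentially the machinery underlying the cited incompatibility theorem), then a bidegree analysis of $d\theta=0$ forcing $\theta^{1,0}\in\langle\phi^1,\dots,\phi^{3n-2}\rangle$, and finally a comparison of $\bar\phi^c$-components ($c\geq 3n-1$) in $\del\omega=\theta^{1,0}\wedge\omega$. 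Your key identity $\sum_{r=1}^{n-1}h_{(3n-1+r)\bar c}\,\phi^{r}\wedge\phi^{n+r}=\theta^{1,0}\wedge\beta_c$ is right: for $c\geq 3n-1$ the forms $\del\bar\phi^k$ contribute no $\bar\phi^c$, by \eqref{eq:struct_eq_phi_del}--\eqref{eq:struct_eq_phi_delbar}. What each approach buys: the paper's is short and reuses work already done, at the price of an external theorem; yours is longer but independent of both Theorem \ref{thm:main} and \cite{ornea-otiman-stanciu}, and it exhibits concretely where the structure equations obstruct LCK geometry.

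Two points to tighten. First, your case division is not exhaustive as written: the complement of ``some left-hand side is nonzero'' is ``all left-hand sides vanish'', not ``$\theta^{1,0}=0$''. This costs nothing, because that case cannot occur: for $c=3n-1+r$ the coefficient of $\phi^{r}\wedge\phi^{n+r}$ on the left is the diagonal entry $h_{c\bar c}>0$. In fact this observation streamlines the whole argument: positive definiteness plus decomposability of the right-hand side forces, for each $r=1,\dots,n-1$, that $\theta^{1,0}$ is a nonzero element of $\langle\phi^{r},\phi^{n+r}\rangle$; for $n\geq 3$ these planes intersect trivially, an immediate contradiction, with no appeal to the no-K\"ahler fact needed. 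Second, your concern about $n=2$ is legitimate and your patch checks out: with $\theta^{1,0}\in\langle\phi^1,\phi^3\rangle\setminus\{0\}$, the monomial $\phi^{5}\wedge\phi^{2}\wedge\bar\phi^{4}$ appears in $\del\omega$ with coefficient $i\,h_{5\bar 5}$ (it can only arise from $-\phi^5\wedge\del\bar\phi^5$), while $\theta^{1,0}\wedge\omega$ cannot produce it, so $h_{5\bar5}=0$, contradicting positive definiteness. Note that this same comparison, run with $\del\bar\phi^{3n-1}$ and the monomial $\phi^{3n-1}\wedge\phi^{n}\wedge\bar\phi^{2n}$, works uniformly for all $n\geq 2$ and would let you dispense with the case split altogether.
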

\begin{proof}
This follows directly combining Theorem \ref{thm:main} and \cite[Theorem 4.9]{ornea-otiman-stanciu} where it is proved that on non-tori complex nilmanifolds endowed with a left-invariant complex structure, locally conformally K\"ahler metrics and balanced metrics cannot coexist.
\end{proof}

\end{document}